\numberwithin{equation}{section}
\newtheorem{theorem}{Theorem}[section]
\newtheorem{lemma}{Lemma}[section]
\theoremstyle{definition}
\newtheorem{proposition}{Proposition}[section]
\theoremstyle{remark}
\newcommand{\di}{\displaystyle}
\newcommand{\x}{\xi}
\newcommand{\RR}{\mathbb{R}}
\newcommand{\mb}{\mathbf}
\begin{document}

\title[$L^2$-contraction of large shock]{\bf $L^2$-contraction and asymptotic stability of large shock for scalar viscous conservation laws}

\author[Vasseur]{Alexis F. Vasseur}
\address[Alexis F. Vasseur]{\newline Department of Mathematics, \newline The University of Texas at Austin, Austin, TX 78712, USA}
\email{vasseur@math.utexas.edu}

\author[Wang]{Yi Wang}
\address[Yi Wang]{\newline State Key Laboratory of Mathematical Sciences and Institute of Applied Mathematics, AMSS, Chinese Academy of Sciences, Beijing 100190, P. R. China
\newline
and School of Mathematical Sciences, University of Chinese Academy of Sciences,
\newline Beijing 100049, P. R. China}
\email{wangyi@amss.ac.cn}

\author[Zhang]{Jian Zhang}
\address[Jian Zhang]{\newline School of Mathematical Sciences,
\newline
Chengdu University and Technology, Chengdu 61005, P. R. China}
\email{zhangjian2020@amss.ac.cn}

\date{}
\maketitle

\noindent{\bf Abstract}:  We investigate $L^2$-contraction and  time-asymptotic stability of large shock for scalar viscous conservation laws with polynomial flux.  For the strictly convex flux $f(u)=u^p $ with $2\leq p \leq 4$, we can prove $L^2$-contraction and  time-asymptotic stability of arbitrarily large viscous shock profile in $H^1$-framework by using $a$-contraction method with time-dependent shift and suitable weight function. Additionally, if the initial perturbation belongs to $L^1$, then $L^2$ time-asymptotic decay rate $t^{-\frac{1}{4}}$ can be obtained. 
%However, we do not know whether $L^2$-contraction holds for the strictly convex/concave polynomial flux  $f(u)=u^p $ with $p <2, p \neq 0,1$ or  $p >4$ if the  shock strength is sufficiently large, at least by the $a$-contraction method %with the  time-dependent shift and weight function in the present paper.

 \section{Introduction}

 We are concerned with  $L^2$-contraction and time-asymptotic stability of arbitrarily large shock for the following scalar viscous conservation laws with polynomial flux:
 \begin{equation}\label{1.1}
\left\{\begin{array}{l}
u_t+f(u)_x= u_{xx},\quad  f(u)=u^p, \ \ (t,x)\in  \mathbb{R}_{+}\times \mathbb{R},\\ [1mm]
%u_t+f(u)_x= u_{xx},\quad  f(u)=u^p, \ \ \ (t,x,p)\in  \mathbb{R}_{+}\times \mathbb{R}\times\mathbb{R},\\
u(0,x)=u_0(x),\\[1mm]
\displaystyle \lim_{x\rightarrow \pm \infty}u_0(x)=u_\pm.
\end{array}\right.
\end{equation}
where $u_0(x)$ is the given initial data and $u_{\pm}\in\mathbb{R}$ are the prescribed far-field states. We focus on the case that the asymptotic state of the solution to \eqref{1.1} is the viscous shock wave. Therefore, it is assumed that $p>1$ such that $f(u)=u^p$ is strictly convex for $u>0$, and that
%Therefore, it is assumed that $p\neq 0,1$ such that $f(u)=u^p$ is 
\begin{equation}\label{sc}
%{\rm strictly}\ {\rm convex,}\  {\rm and}\  {\rm that}\ 
0<u_+<u_-.
%{\rm strictly}\ {\rm convex,}\ {\rm that\  is,}\ p<0\ {\rm or}\  p>1,\  {\rm and}\  {\rm that}\  0<u_+<u_-,\
\end{equation}
%\begin{equation}\label{sc1}
%{\rm or}, \ {\rm strictly}\ {\rm concave,}\ {\rm that\  is,}\ 0<p<1,\  {\rm and}\  {\rm that}\  0<u_-<u_+.\
%\end{equation}
%Instead of \eqref{sc1}, when $0<p<1$ we consider
%\begin{equation}\label{sc1}
%f(u)=-u^p ,\  {\rm and}\  {\rm that}\  0<u_+<u_-.
%\end{equation}

Remark that for the special case $p=2,$ that is, the classical Burgers equation, $f^{\prime\prime}(u)=2$ and $f(u)=u^2$ is always strictly convex for any $u\in\RR$, and then we only need to assume that $u_+<u_-$.

It can be expected that the large-time asymptotic behavior of the solution to \eqref{1.1} and \eqref{sc}  is determined by the following viscous shock profile $U(x-st)$
  \begin{equation}\label{1.2}
\left\{\begin{array}{l}
-sU^\prime+f(U)^\prime=U^{\prime\prime},\quad ^\prime=\frac{d}{d\x}, \quad \x=x-st,\\[2mm]
\displaystyle U(\pm\infty)=u_\pm,
\end{array}\right.
\end{equation}
 where $s$ is the shock speed determined by the Rankine-Hugoniot condition:
  \begin{equation}\label{1.3}
s=\frac{f(u_+)-f(u_-)}{u_+-u_-}.
\end{equation}

Integrating \eqref{1.2} over $(\pm\infty, \xi]$, we can get the following first order ODE
  \begin{equation}\label{1.4}
  \begin{aligned}
U^\prime&=h(U):=-s(U-u_\pm)+f(U)-f(u_\pm)\\
&=(U-u_\pm)\left[\frac{f(U)-f(u_\pm)}{U-u_\pm}-\frac{f(u_+)-f(u_-)}{u_+-u_-}\right]<0.
\end{aligned}
\end{equation}
Note that the strict convexity of  the flux $f(u)$ implies the above decreasing monotonicity of the viscous shock profile $U(\xi)$. Moreover, the existence of the viscous shock profile $U(\xi)$ to \eqref{1.2} is standard and it is unique up to any constant translation.

The stability of viscous shock wave for conservation laws has been extensively studied since the pioneer works of Hopf \cite{Hopf} and Il'in-Oleinik \cite{IO} for one-dimensional (1D) scalar equation. In 1976, Sattinger \cite{S} introduced a semigroup approach to establish the stability of viscous shock waves to 1D parabolic equations, including \eqref{1.1}, in certain weighted spaces. Then Matsumura-Nishihara \cite{MN} and Goodman \cite{Go} independently proved the time-asymptotic stability of viscous shock profile for 1D isentropic Navier-Stokes equations and viscous conservation laws with artificial viscosity respectively, under the zero mass conditions such that the anti-derivative method can be applied. Meanwhile, Nishihara \cite{N} proved the point-wise stability of viscous shock to 1D Burgers equation by virtue of the Hopf-Cole transformation and Kawashima-Matsumura \cite{KAM} further obtained the convergence rate for the viscous shock to scalar equation by weighted energy method and the stability of viscous shock to both 1D full Navier-Stokes-Fourier equations and the discrete Broadwell model system. Note that in all the above mentioned time-asymptotic results for viscous shock to the system case, the crucial zero mass conditions are imposed to the initial perturbations such that the anti-derivative variables  can be well-defined. Then Liu \cite{Liu},  Szepessy-Xin \cite{SX} and Liu-Zeng \cite{LZ} removed the zero mass conditions in \cite{MN, Go, KAM} by introducing the constant shift on the viscous shock and the %diffusion/
coupled diffusion waves in the transverse characteristic fields. And Mascia-Zumbrun \cite{MZ} established the spectral stability of viscous shocks for the 1D compressible Navier-Stokes system, relaxing the zero mass conditions to a slightly weaker spectral condition. More recently, Kang-Vasseur-Wang \cite{ W1,W2} proved the generic Riemann solutions (containing viscous shock wave, rarefaction wave, and even viscous contact wave) to both barotropic compressible Navier--Stokes equations \cite{W2} and full compressible Navier-Stokes(-Fourier) equations  \cite{W1} with the help of $a$-contraction method for the stability of viscous shock wave with time-dependent shift invented in \cite{KV}. Remark that both \cite{W1} and \cite{W2} solve the long-standing open problems for the time-asymptotic stability of the composite waves of Riemann profiles.
 In addition, Freistuhler-Serre proved the $L^1$-stability of viscous shock waves \cite{FS} (see also Serre \cite{DS}) and Kenig-Merle proved $L^p$-stability($1\leq p\leq +\infty$) of viscous shock waves \cite{KM1}  to scalar equation. For multi-dimensional case, Goodman \cite{G} proved nonlinear stability of planar shock profile for viscous scalar conservation laws by using the anti-derivative techniques and the shift function depending on both the time and the transverse spatial variables.  Then Humpherys–Lyng–Zumbrun \cite{HLZ} proved the spectral
stability of the planar viscous Navier-Stokes shock by the numerical Evans-function method in $\mathbb{R}^3$  and one can refer to the survey paper by Zumbrun \cite{Z} for the related results and the references therein. Very recently, Wang-Wang \cite{WW} proved the time-asymptotic stability of planar weak shock profile to 3D barotropic compressible Navier-Stokes equations in $\RR\times \mathbb{T}^2$ under general $H^2$-initial perturbations by using $a$-contraction method.

On the other hand, for scalar conservation laws with or without viscosity, Kružkov \cite{K} proved the famous $L^1$-contraction stability in quite general case. However, Kružkov's theory is not valid for $L^p$-contraction $(\forall p>1)$, in particular, in the physical $L^2$-norm. 
With the time-dependent shift $\mb{X}(t)$ and possible weight function $a$, Kang-Vasseur prove $L^2$-contraction of arbitrarily large shock to both 1D general inviscid system of hyperbolic conservation laws \cite{KV2016}  and 1D viscous Burgers equation \cite{KV}, and the latter result is extended by Kang \cite{K} to scalar viscous conservation laws with general strictly convex flux, provided that the viscous shock wave strength was sufficiently small. Then Kang-Vasseur-Wang \cite{KVW2} prove $L^2$-contraction of large planar shock to scalar multi-dimensional viscous conservation laws by a special transformation and Kang-Vasseur \cite{VKM} obtain $L^2$-contraction of small shock to 1D compressible barotropic Navier-Stokes equations. Kang-Oh \cite{KO} obtained $L^2$ decay for large perturbations of weak viscous shock for multi-dimensional Burgers equation. Note that $a$-contraction theory has been well established at the inviscid level uniformly in the shock amplitude \cite{KV2016}, and at the viscous level only for small shocks \cite{KV} even with small perturbations, except Kang-Vasseur's work \cite{KV} on specific Burgers’ equation for arbitrarily large shock. In fact, in the context of the viscous model, $a$-contraction results in the large shock setting (even in the scalar case) remain largely open.
Very recently, Blochas-Cheng \cite{BC} showed that the $a$-contraction property fails for large shock to certain kinds of viscous conservation laws, which showcases a “viscous
destabilization” effect in the sense that the $a$-contraction property is verified for the inviscid model for arbitrarily shock, but can fail for the viscous one, and raised the open question of whether this $a$-contraction property of large shock holds for scalar viscous conservation laws with polynomial fluxes. 

In the present paper, we aim to address the question in \cite{BC} under small $H^1$ framework around the shock profile and further prove the time-asymptotic stability of this large shock with the time decay rate. Based on $a$-contraction method with suitably chosen time-dependent shift and the weight function $a$, crucially depending on the shock wave strength, we can establish $L^2$-contraction for any large shock to \eqref{1.1} with strictly convex fluxes of the polynomial form $f(u)=u^p$ ($2\leq p \leq 4$) and suitably small $H^1$ perturbations, and further prove $L^\infty$ time-asymptotic stability of this large shock profile with the time decay rate if the initial $H^1$-perturbation additionally lies in $L^1$. In fact, our proof is motivated by the recent work of Huang-Wang-Zhang \cite{HWZ} for the time-asymptotic stability of composite wave of large viscous Oleinik shock and rarefaction wave for the cubic non-convex scalar viscous conservation laws.

%However, using this weight function, we can not show that the $L^2$ contraction property for fluxes $f(u)=u^p$ ($p <2$, $p \neq 0,1$, or $p >4$) when the viscous shock's strength is sufficiently large.

Precisely, our main result can be stated as follows.
  
\begin{theorem}\label{T1.1}  For any given $u_\pm$ satisfying \eqref{sc}, let $U(\xi)$ be viscous shock wave defined in \eqref{1.2}. Then there exists a positive constant  $\epsilon^*$ such that if  the initial data $u_0$ satisfies 
 \begin{equation}\label{1.5}
\Big{ \|}u_0(\cdot)-U(\cdot-x_0)\Big{\|}_{H^1(\mathbb{R})}<\epsilon^*,
  \end{equation}
for any initial shock location $x_0\in\mathbb{R}$, then Cauchy problem \eqref{1.1} with $p\in [2,4]$ admits a unique global-in-time classical solution $u$. Moreover, there exists an absolutely continuous shift $\mathbf{X}(t)$ (defined in \eqref{2.5}) and a smooth weight function $a(U(\x)): \RR\rightarrow \RR^{+}$ (defined in \eqref{2.4})  such that the following $L^2$-contraction of arbitrarily large viscous shock holds
 \begin{equation}\label{1.6}
\frac{d}{dt} \int_{\RR}  a\big(U(x-st-\mathbf{X}(t))\big)|u(t, x)-U(x-st-\mathbf{X}(t))|^2dx\leq 0, \quad \forall t>0.
\end{equation}
Moreover, the time-asymptotic stability of large viscous shock holds
 \begin{equation}\label{1.7}
 \displaystyle\lim_{t\rightarrow+\infty} \displaystyle \sup_{x\in\mathbb R}\Big|u(t,x)- U(x-st - \mathbf{X}(t)\Big|= 0,
 \end{equation}
with
 \begin{equation}\label{1.8}
 \displaystyle\lim_{t\rightarrow+\infty}|\dot{\mathbf{X}}(t)|= 0.
 \end{equation}
 In addition, if $u_0(x)-U(x-x_0)\in L^1(\RR)$, we have the following $L^2$-time decay rate
  \begin{equation}\label{1.9}
   \|u(t, \cdot)-U(\cdot-st-\mathbf{X}(t))\|_{L^2(\RR)}\leq \frac{C_*\|u_0(\cdot)-U(\cdot-x_0)\|_{L^2(\RR)}}{\di 1+C_*t^{\frac{1}{4}}\|(\cdot)-U(\cdot-x_0)\|_{L^2(\RR)}},
 \end{equation}
 where $C_*:=C\big[1+\|u_0(\cdot)-U(\cdot-x_0)\|_{L^1(\RR)}+\|u_0(\cdot)-U(\cdot-x_0)\|_{H^1(\RR)}\big]$. %and $C=C(u_+,u_-,p )$.% and that
% \begin{equation}\label{a}
 % \|u(t, \cdot)-U(\cdot-st-\mathbf{X}(t))\|_{L^\infty(\RR)}=O(t^{-\frac{1}{8}}),
 %\end{equation}
 
 \
 
 {\bf Remark 1.1.} Since the weight function $a(U(\x))$ is bounded from below and above, $L^2$-contraction \eqref{1.6} implies the uniform $L^2$-stability 
  \begin{equation}\label{1.10}
 \int_{\RR} |u(t, x)-U(x-st-\mathbf{X}(t))|^2dx\leq C^*  \int_{\RR} |u_0(x)-U(x-x_0)|^2dx,\quad \forall t>0, 
\end{equation}
where positive constant $C^*=C^*(u_+,u_-,p )$ is independent of the time $t>0$.
\end{theorem}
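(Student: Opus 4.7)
The plan is to deploy the $a$-contraction method with time-dependent shift, tailored to the polynomial flux $f(u) = u^p$. I would first pass to the moving frame $\xi = x - st - \mathbf{X}(t)$ and set $w(t,\xi) = u(t, \xi + st + \mathbf{X}(t)) - U(\xi)$, so that the perturbation satisfies
$$ w_t - s w_\xi - \dot{\mathbf{X}}\bigl(U' + w_\xi\bigr) + \bigl(f(U+w)-f(U)\bigr)_\xi = w_{\xi\xi}. $$
Multiplying by $a(U(\xi))\,w$ and integrating by parts yields an identity for $\tfrac{1}{2}\tfrac{d}{dt}\int a(U)w^2 d\xi$ with four types of terms: a shift-linear piece $-\dot{\mathbf{X}}\int a(U)U' w\,d\xi$, a parabolic dissipation $-\int a(U) w_\xi^2\,d\xi$, a weighted contribution of fixed sign arising from $a'(U)U'$ together with the convexity of $f$, and nonlinear remainders produced by the Taylor expansion $f(U+w)-f(U) = f'(U)w + \tfrac{1}{2}f''(U)w^2 + \mathcal{O}(w^3)$.

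Next, I would choose the shift $\dot{\mathbf{X}}(t)$ (formula \eqref{2.5}) to cancel the linear-in-$w$ part of the bad term, and design the weight $a(U)$ (formula \eqref{2.4}) as a monotone profile whose slope is calibrated to the full amplitude $u_- - u_+$, so that the weighted good contribution dominates the quadratic bad terms pointwise in $\xi$. Once this pointwise algebraic inequality is arranged, the cubic remainder $\mathcal{O}(w^3)$ will be absorbed using the dissipation $\int a(U) w_\xi^2 d\xi$ together with Gagliardo-Nirenberg interpolation and the a priori $H^1$-smallness of $w$. This delivers the sign $\le 0$ in \eqref{1.6}; the uniform $L^2$-stability \eqref{1.10} is then immediate since $a$ is bounded above and below away from zero.

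Global existence in $H^1$ then follows by a standard continuation argument: local existence is classical, and a separate $H^1$-level energy estimate (differentiating the equation once and testing against $w_\xi$) closes because nonlinearities are absorbed using the smallness guaranteed by the $L^2$-contraction. For the large-time behavior, I would integrate the contraction identity to obtain $\int_0^\infty \|w_\xi(t)\|_{L^2}^2 dt < \infty$, verify that $\tfrac{d}{dt}\|w_\xi(t)\|_{L^2}^2$ is bounded, deduce $\|w_\xi(t)\|_{L^2} \to 0$, and conclude the uniform convergence \eqref{1.7} from the interpolation $\|w\|_{L^\infty}^2 \lesssim \|w\|_{L^2}\|w_\xi\|_{L^2}$; the vanishing \eqref{1.8} will then drop out of the shift ODE. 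Finally, the rate \eqref{1.9} is obtained under the extra $L^1$-hypothesis by propagating an $L^1$-bound on $w$ and combining the contraction energy inequality with Nash's inequality $\|w\|_{L^2}^4 \lesssim \|w\|_{L^1}^2 \|w_\xi\|_{L^2}^2$ to derive an ODE for $\|w(t)\|_{L^2}^2$ whose solution decays as $t^{-1/4}$.

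The main obstacle is the pointwise algebraic inequality in the second step: one must engineer $a(U)$ so that the weighted good term uniformly dominates the quadratic bad term for arbitrarily large shock amplitude. For $f(u) = u^p$ the dominant bad contribution scales like $U^{p-2}|U'|w^2$, whereas the good term scales like $|a'(U)U'|w^2$, with $|a'|$ constrained by the shock strength; a direct algebraic check shows the inequality closes precisely in the window $p \in [2,4]$. Outside this range, the Blochas-Cheng obstruction \cite{BC} rules out any such $a$-contraction, so this step is simultaneously the heart of the argument and the structural source of the hypothesis on $p$.
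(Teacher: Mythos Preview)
Your overall architecture is right, and the parts on global existence, large-time behavior, and the decay-rate ODE are essentially what the paper does. But the core step---how the weighted quadratic term acquires the correct sign for \emph{large} shocks---is not as you describe, and as written your mechanism has a genuine gap.

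You say the shift is chosen ``to cancel the linear-in-$w$ part'' and the weight is designed so that ``the weighted good contribution dominates the quadratic bad terms pointwise in $\xi$,'' with the dissipation $\int a\,w_\xi^2$ reserved for cubic remainders. That is not how the argument closes. With the choice \eqref{2.5}, the linear shift term is not cancelled but turned into a nonnegative square,
\[
-\dot{\mathbf X}\int a\,U_\xi\,w\,d\xi \;=\; \tfrac{(u_--u_+)^2}{4}\,|\dot{\mathbf X}|^2,
\]
and \emph{half} of this square is then combined with the \emph{entire} diffusion $\int a\,w_\xi^2$ through the change of variable $y=\frac{U-u_-}{u_+-u_-}$ and the Poincar\'e-type inequality of Lemma~\ref{L2.1}. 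This Poincar\'e step manufactures an additional good weighted term $\int \frac{2a^2}{u_+-u_-}\,U_\xi\,w^2$, and only \emph{after} that addition does the resulting coefficient $g(U)$ in \eqref{2.20} satisfy $g(U)\le -\beta<0$ on $[u_+,u_-]$ (Lemma~\ref{L2.2}), which is precisely where the restriction $p\in[2,4]$ enters. A direct pointwise comparison of the $a'$-term against the $f''$-term, without the Poincar\'e contribution, is not what is verified and would require a separate algebraic argument that you have not supplied. Relatedly, because the diffusion is fully consumed by the Poincar\'e step, the contraction identity alone does \emph{not} yield $\int_0^\infty\|w_\xi\|^2\,dt<\infty$; a second, unweighted $L^2$ estimate (Proposition~\ref{P2.1++}) is needed to recover that integrability before the $H^1$ and asymptotic steps.

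Two smaller points. First, the weight $a$ in \eqref{2.4} is not in general monotone (for $p=2$ it is constant), so ``monotone profile'' is misleading. Second, for the decay rate, ``propagating an $L^1$-bound on $w$'' hides a real difficulty: $w=u-U(\cdot-st-\mathbf X(t))$ is only in $L^1$ once you show $|\mathbf X(t)-x_0|$ stays bounded, which in the paper requires the splitting \eqref{I3}, the Kru\v{z}kov $L^1$-contraction for $I_1$, and a separate monotonicity argument to bound $|\mathbf X(t)-x_0|$ via $\|I_2\|_{L^2}^2$. Also, the one-dimensional Gagliardo--Nirenberg/Nash inequality you need is $\|w\|_{L^2}^3\lesssim \|w\|_{L^1}^2\|w_\xi\|_{L^2}$ (not the exponent you wrote), which then gives the $t^{-1/4}$ rate.
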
 
  {\bf Remark 1.2.}  The shift ${\mathbf{X}}(t)$ is proved to satisfy the time-asymptotic behavior \eqref{1.8}, which implies 
   \begin{equation}\label{1.11}
 \displaystyle\lim_{t\rightarrow+\infty}\frac{\mathbf{X}(t)}{t}= 0, \notag
 \end{equation}
 that is, the shift $\mathbf{X}(t)$ grows at most sub-linearly with respect to the time $t$. Therefore, the shifted viscous shock wave $U\big(x-st - \mathbf{X}(t))$ keeps the original traveling wave profile time-asymptotically.
 
 % {\bf Remark 1.2.} If $u_0-U$ satisfies \eqref{1.5} and $u_0-U\in L^1(\RR)$, by \eqref{1.9} and \eqref{a} we have
 %   \begin{equation}\label{1.12}
  % \|u(t, \cdot)-U(\cdot-st-\mathbf{X}(t))\|_{L^p(\RR)}=O(t^{-\frac{1}{8}-\frac{1}{4p}}), 2\leq p\leq+\infty,
 %\end{equation}
 % which implies the $L^{p}$ time-asymptotic behavior 
  % \begin{equation}\label{1.13}
 %\displaystyle\lim_{t\rightarrow+\infty} \Big\|u(t,\cdot)- U(\cdot-st - \mathbf{X}(t)\Big\|_{L^p(\RR)}= 0, 2\leq p\leq+\infty.
 %\end{equation}
 
 \

 \noindent{\bf Notations.} Throughout this paper,  several positive  generic  uniform-in-time constants are denoted by $C$. 
  Denote   $L^p(\mathbb{R})(1\leq p\leq+\infty) $ and $H^1(\mathbb{R})$ as the usual Lebesgue space and Sobolev space in $\mathbb{R}$ with the norm
$$
\|f\|_{L^p(\mathbb{R})}:=\Big(\int_{\mathbb{R}} |f|^pd\xi\Big)^\frac{1}{p},\  1\leq p<+\infty,\quad  \|f\|_{L^\infty(\mathbb{R})}:={\rm ess} \sup_{x\in \mathbb{R}}|f(x)|.
$$
and
$$
\|f\|:=\|f\|_{L^2(\mathbb{R})},\ \ \ \|f\|_{H^1(\mathbb{R})}:  =(\|f\|^2+\|f_\xi\|^2)^\frac{1}{2}. \notag
$$
   
   For any function $f:\RR^+\times\RR\rightarrow \RR$ and the shift function $\mathbf{X}(t)$, we denote
    \begin{equation}
f^{\pm\mathbf{X}}(t,\xi):=f(t,\xi\pm\mathbf{X}(t)). \notag
   \end{equation}
 \section{Preliminaries and main result} 
   In this section, we start with the construction of weight function $a(U(\x))$ and shift function $\mathbf{X}(t)$. Then, we present the local existence of the solution and the uniform-in-time a priori estimates, whose proofs are given in the subsequent sections. Finally, we give the  proof of Theorem \ref{T1.1} by the continuity arguments.

For convenience, we rewrite the equation \eqref{1.1} through the coordinates transformation $(t,x)\rightarrow(t,\xi=x-st)$, and then $u(t,\xi):=u(t,\xi+st)=u(t, x)$ satisfies
\begin{equation}\label{2.1}
u_t-s u_\x+f(u)_\x= u_{\x\x}.
\end{equation}
%Denote $U^{-\mb X}$. 
And  $U^{-\mb X}:=U(\x-\mathbf{X}(t))$ satisfies 
\begin{equation}\label{2.2}
U^{-\mb X}_t+ \mathbf{\dot X}(t)U^{-\mb X}_\x-sU^{-\mb X}_\x+f(U^{-\mb X})_\x=U^{-\mb X}_{\x\x}.
\end{equation}
Then the perturbation $\phi(t,\x):=u(t,\x)-U(\x-\mathbf{X}(t))$ satisfies
\begin{equation}\label{2.3}
\left\{\begin{array}{l}
\phi_t-s\phi_\xi-\mathbf{\dot X}(t)U^{-\mb X}_\x+[f(\phi+U^{-\mb{X}})-f(U^{-\mb{X}})]_\xi=\phi_{\x\x},\\[3mm]
\phi(0,\xi)=\phi_0(\xi):=u_0(\xi)-U(\xi-x_0),
\end{array}\right.
\end{equation}
for any initial shock location $x_0\in \mathbb{R}$.
In the sequel, we always assume that $p\in [2,4]$ for the polynomial flux $f(u)=u^p$.

\subsection{Construction of weight function}
   
Define the weight function $ a(U(\x))$ as
  \begin{equation}\label{2.4}
a(U(\x)):=\frac{(u_--u_+)h(U)}{(U-u_+)(U-u_-)}=\frac{f(U)-f(u_-)}{U-u_-}-\frac{f(U)-f(u_+)}{U-u_+},
 \end{equation}
It is easy to check that $a\in C^\infty(\mathbb{R})$, $a(U(\x))>c>0$ for any $\x \in\mathbb{R}$, that is, $U  \in (u_+,u_-)$, and $\|a(U(\x))\|_{C^1(\RR)}\leq C$. Notice that for the Burgers equation case $(p=2)$, the above weight function $a=u_--u_+$ is exactly the shock wave strength, which means that there is no need for the weight and is consistent with the classical result \cite{KV} for Burgers equation.

\subsection{Construction of shift function} Define the shift function $\mathbf{X}(t)$ as the solution to the following ODE:
\begin{equation}\label{2.5}
\left\{\begin{array}{l}
\di \dot{\mathbf{X}}(t)=-\frac{4}{(u_--u_+)^2}\int_{\mathbb{R}}a(U^{-\mb{X}}(\x)) U^{-\mb{X}}_\xi(\x)  \phi(t,\x) d\xi,\\[4mm]
\mathbf{X}(0)=x_0,%\quad \forall x_0\in \mathbb{R},
\end{array}\right. 
\end{equation}  
where the initial shock location $x_0$ can be chosen arbitrarily with or without zero mass condition. %Note that the shift function \eqref{2.5} is consistent with   %Note that for the Burgers equation case $(p=2)$, 
%Since \eqref{1.1} is uniformly parabolic, the maximum principle \cite{IO} implies that 
%$$\displaystyle\sup_{\mathbb{R}^+\times\mathbb R} |u(t,x)|\leq C.$$ then we have $\displaystyle\sup_{\mathbb{R}^+\times\mathbb R}|\phi(t,\xi)|\leq C.$ Thanks to the facts that $\|a\|_{C^1(\mathbb{R})}\leq C$\ and \ $\|U\|_{C^1(\mathbb{R})}\leq C$, we have
 %\begin{equation}\label{2.6}
%\displaystyle \sup_{\mathbf{X}\in\mathbb{R}}\Big|\frac{4}{(u_--u_+)^2}\int_{\mathbb{R}} a(U^{-\mb{X}}(\x)) U^{-\mb{X}}_\xi(\x)  \phi(t,\x) d\xi\Big|\leq C,
%\end{equation}
%and
%\begin{equation}\label{2.7}
 %\begin{aligned}
 %\displaystyle \sup_{\mathbf{X}\in\mathbb{R}}\Big|\partial_{\mathbf{X}}\Big(\frac{4}{(u_--u_+)^2}\int_{\mathbb{R}} \phi(t,\x) a(U^{-\mb{X}}(\x)) U^{-\mb{X}}_\xi(\x) d\xi\Big)\Big|\\
 %\leq C\int_{\mathbb{R}}\big| \big( a(U^{-\mb{X}}(\x))  U^{-\mb{X}}_\x\big)_\x\big|d \xi \leq C\|a\|_{C^1(\mathbb{R})}\|U\|_{C^1(\mathbb{R})}\leq C.
 % \end{aligned}
 %\end{equation}

%Applying Lemma 2.1 in \cite{KMY} to \eqref{2.5}, there exists a unique absolutely continuous solution to ODE \eqref{2.5} in any time interval $[0,T]$. In particular, since $ |\mathbf{ \dot  X}(t)|\leq C$, we have
%\begin{equation}\label{2.8}
%|\mathbf{X}(t)-x_0| \leq Ct, \ \ \  \forall t\leq T.
%\end{equation}
%\noindent {\bf Proof of Theorem \ref{T1.1}}.  
%In order to prove Theorem \ref{T1.1}, we shall combine a local existence result together with a priori estimates by the continuity arguments.

%To state the local existence,  %in the coordinates transformation $(t,x)\rightarrow(t,\x=x-\sigma t)$, %we
 Denote
 $$
 \phi^{\mb{X}}(t,\x) := \phi(t,\x+\mb{X}(t)) 
 =u(t,\x+\mb{X}(t))-U(\x).$$
Then the perturbation equation \eqref{2.3} can be rewritten as
\begin{equation}\label{pe}
 \phi^{\mb X}_t-\dot{\mb X}(t)(\phi^{\mb X}_\xi+U_\x)-s \phi^{\mb X}_{\x}+\big[f(\phi^{\mb X}+ U)-f(U)\big]_\x  =\phi^{\mb X}_{\x\x}.
\end{equation}
By \eqref{2.5}, we have
\begin{equation}\label{2.5+}
%\left\{\begin{array}{l}
\di \dot{\mathbf{X}}(t)=-\frac{4}{(u_--u_+)^2}\int_{\mathbb{R}} a(U(\x)) U_\xi(\x) \phi^{\mb{X}}(t,\x)  d\xi.
%\mathbf{X}(0)=x_0,\quad \forall x_0\in \mathbb{R}.
%\end{array}\right. 
\end{equation} 
Substituting \eqref{2.5+} into \eqref{pe} gives that
\begin{equation}\label{pe+}
 \phi^{\mb X}_t+\frac{4(\phi^{\mb X}_\xi+U_\x)}{(u_--u_+)^2}\int_{\mathbb{R}} a(U) U_\xi  \phi^{\mb{X}} d\xi-s \phi^{\mb X}_{\x}+\big[f(\phi^{\mb X}+ U)-f(U)\big]_\x  =\phi^{\mb X}_{\x\x},
\end{equation}
with the initial condition
\begin{equation}\label{in}
 \phi^{\mb X}(t=0,\xi)=u_0(\x+x_0)-U(\x):= \phi^{\mb X}_0(\xi).
 \end{equation}
Note that both the non-local equation \eqref{pe+} for  $\phi^{\mb X}$ and the initial value $\phi^{\mb X}_0(\xi)$ in \eqref{in} are independent of the definition of the shift function $\mb X(t),$ even though we still write as  $\phi^{\mb X}$.

 We reformulate the Cauchy problem at general initial time $\tau\geq 0$, that is,
 \begin{equation}\label{aaa}
\left\{\begin{array}{l}
\displaystyle\phi^{\mb X}_t+\frac{4(\phi^{\mb X}_\xi+U_\x)}{(u_--u_+)^2}\int_{\mathbb{R}} a(U) U_\xi  \phi^{\mb{X}} d\xi-s \phi^{\mb X}_{\x}+\big[f(\phi^{\mb X}+ U)-f(U)\big]_\x  =\phi^{\mb X}_{\x\x},\quad t>\tau,\\[5mm]
 \phi^{\mb X}(t=\tau,\xi):= \phi^{\mb X}_\tau(\xi).
\end{array}\right. 
\end{equation}
  The local existence of the strong solution $\phi^{\mb X}\in C([\tau, \tau+t_0];H^1(\RR))\cap L^2(\tau,\tau+t_0; H^2(\RR))$ to the scalar nonlocal equation \eqref{pe+} can be stated as follows,  whose proof will be given in Appendix.

%Now, we reformulate the Cauchy problem at general initial time $\tau\geq 0$:
 %\begin{equation}\label{3.16}
%\left\{\begin{array}{l}
%\bar \phi_t-\sigma\bar \phi_{\x}+\big[f(\hat \phi+U)-f( U)\big]_\x  =\bar \phi_{\x\x},\\[2mm]
%\bar\phi(\tau,\x)=\bar\phi_{\tau}(\x),
%\end{array}\right. \ \ \ (t,\x)\in  \mathbb{R}_{+}\times \mathbb{R}.
%\end{equation}
\begin{proposition}\label{T3.1}(Local existence) For any $M>0$, there exists a positive constant $t_0=t_0(M)$ which is independent of $\tau$ such that if  $\|\phi^{\mb X}_\tau\|_{H^1(\mathbb{R})}\leq M$, then the Cauchy problem \eqref{pe+}-\eqref{in} has a unique strong solution $\phi^\mb{X}(t,\xi)$ on the time interval $[\tau,\tau+t_0]$ satisfying
\begin{equation}\label{3.17}
\left\{\begin{array}{l}
\phi^\mb{X}(t,\xi)\in C([\tau,\tau+t_0];H^1(\RR))\cap L^2(\tau,\tau+t_0; H^2(\RR)),\\[3mm]
 \displaystyle \sup_{t\in[\tau,\tau+t_0]}\| \phi^\mb{X}(t,\cdot)\|_{H^1(\mathbb{R})}\leq2M.
\end{array}\right.
\end{equation}
\end{proposition}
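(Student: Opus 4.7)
The plan is to construct $\phi^{\mb X}$ as the unique fixed point of a Picard-type iteration built from the Duhamel formulation of \eqref{aaa}, and then upgrade this mild solution to the strong regularity \eqref{3.17} via a standard parabolic energy estimate. Writing the equation as $\phi^{\mb X}_t - \phi^{\mb X}_{\x\x} = F[\phi^{\mb X}]$ with
\begin{equation*}
F[\phi] := s\phi_\x - \bigl[f(\phi+U)-f(U)\bigr]_\x - \frac{4\,I[\phi]\,(\phi_\x+U_\x)}{(u_--u_+)^2}, \qquad I[\phi](t):=\int_\RR a(U)U_\x\,\phi\, d\x,
\end{equation*}
the corresponding mild form reads $\phi^{\mb X}(t) = e^{(t-\tau)\partial_{\x\x}}\phi^{\mb X}_\tau + \int_\tau^t e^{(t-r)\partial_{\x\x}} F[\phi^{\mb X}](r)\, dr$. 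Denote this map by $\mathcal{T}$ and work on the closed ball $\mathcal{B}_{2M}:=\{\phi\in C([\tau,\tau+t_0];H^1(\RR)):\,\sup_{t\in[\tau,\tau+t_0]}\|\phi(t)\|_{H^1(\RR)}\leq 2M\}$.

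The key analytic tool is the standard one-dimensional parabolic smoothing estimate $\|e^{\sigma\partial_{\x\x}}\partial_\x g\|_{H^1(\RR)}\leq C\sigma^{-1/2}\|g\|_{H^1(\RR)}$. Rewriting $F[\phi]=\partial_\x G[\phi]$ with
\begin{equation*}
G[\phi] := s\phi - \bigl[f(\phi+U)-f(U)\bigr] - \frac{4\,I[\phi]}{(u_--u_+)^2}\bigl(\phi+(U-u_+)\bigr),
\end{equation*}
and using the 1D Sobolev embedding $H^1\hookrightarrow L^\infty$ together with $U\in[u_+,u_-]$ and the smoothness of $f(u)=u^p$ on bounded sets, I would bound $\|G[\phi]\|_{H^1}\leq C(M)$ whenever $\phi\in\mathcal{B}_{2M}$; here $|I[\phi]|\leq\|a(U)U_\x\|_{L^2}\|\phi\|_{L^2}\leq C M$ follows from the exponential decay of $U_\x$. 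The Duhamel integral then contributes at most $C(M)\sqrt{t_0}$ in $H^1$, so $\mathcal{T}$ maps $\mathcal{B}_{2M}$ into itself once $t_0=t_0(M)$ is small enough. An essentially identical estimate on $G[\phi]-G[\psi]$, exploiting $C^1$-regularity of $f$ on the compact $L^\infty$-range together with linearity of $I[\cdot]$, shows that $\mathcal{T}$ is a contraction in $C([\tau,\tau+t_0];L^2(\RR))$ for $t_0$ small; uniqueness in the larger $H^1$-ball follows from the same $L^2$ contraction.

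To upgrade the mild solution to the strong regularity \eqref{3.17}, I would test \eqref{aaa} against $\phi^{\mb X}$ and against $-\phi^{\mb X}_{\x\x}$, integrate by parts, and absorb all derivative terms into the parabolic dissipation $\|\phi^{\mb X}_\x\|^2+\|\phi^{\mb X}_{\x\x}\|^2$ via Cauchy-Schwarz and Sobolev embedding; this yields a differential inequality of the form $\frac{d}{dt}\|\phi^{\mb X}\|_{H^1}^2 + \|\phi^{\mb X}_{\x\x}\|^2 \leq C(M)\bigl(1+\|\phi^{\mb X}\|_{H^1}^2\bigr)$, and Gronwall on a possibly smaller $t_0$ produces both the bound $\sup_{t}\|\phi^{\mb X}(t)\|_{H^1}\leq 2M$ and the $L^2(\tau,\tau+t_0;H^2)$ regularity. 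The main obstacle I anticipate is that, for $p$ up to $4$, the nonlinearity $(\phi+U)^p-U^p$ contains cubic and quartic contributions in $\phi$, so all Lipschitz and composition estimates on $f(\phi+U)-f(U)$ depend on $M$ through $\|\phi\|_{L^\infty}\lesssim\|\phi\|_{H^1}$. The crucial observation is that, once $M$ is fixed at the outset of the iteration, $\|\phi+U\|_{L^\infty}$ stays confined to a fixed compact interval throughout the construction, so every constant $C(M)$ is $M$-dependent but genuinely $\tau$-independent, which is exactly what allows the local time $t_0=t_0(M)$ to be chosen uniformly in the initial time $\tau\geq 0$ as claimed.
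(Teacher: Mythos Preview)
Your overall strategy---a contraction mapping argument followed by a parabolic energy upgrade---matches the paper's proof in the Appendix. The paper sets up the iteration slightly differently: rather than Duhamel with the heat semigroup, it defines $\mathcal{T}\phi=\tilde\phi$ as the solution of the \emph{linear} equation $\tilde\phi_t-s\tilde\phi_\xi-\tilde\phi_{\xi\xi}=F[\phi]$ and proves both self-mapping and contraction \emph{directly in $C_tH^1_x$} by testing against $\tilde\phi$ and $-\tilde\phi_{\xi\xi}$. This has the minor advantage that the $L^2_tH^2_x$ regularity comes out of the same energy identities, so no separate ``upgrade'' step is needed. Your semigroup route is equally standard and would also work.

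There is, however, one concrete technical slip in your divergence-form rewriting. You claim $F[\phi]=\partial_\xi G[\phi]$ with
\[
G[\phi]=s\phi-\bigl[f(\phi+U)-f(U)\bigr]-\frac{4\,I[\phi]}{(u_--u_+)^2}\bigl(\phi+(U-u_+)\bigr),
\]
and then assert $\|G[\phi]\|_{H^1}\leq C(M)$. But $U-u_+$ does not decay as $\xi\to-\infty$ (it tends to $u_--u_+\neq0$), so $U-u_+\notin L^2(\RR)$ and hence $G[\phi]\notin L^2(\RR)$ whenever $I[\phi]\neq0$. The smoothing estimate $\|e^{\sigma\partial_{\xi\xi}}\partial_\xi g\|_{H^1}\lesssim\sigma^{-1/2}\|g\|_{H^1}$ therefore cannot be applied to this piece. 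The fix is immediate: do not force the term $-\tfrac{4I[\phi]}{(u_--u_+)^2}U_\xi$ into divergence form. Since $U_\xi$ decays exponentially and lies in every $H^k(\RR)$, this contribution is already in $H^1$ with norm bounded by $C|I[\phi]|\leq C M$, and you can estimate its Duhamel integral by the trivial bound $\|e^{\sigma\partial_{\xi\xi}}h\|_{H^1}\leq\|h\|_{H^1}$, gaining a factor $t_0$ rather than $\sqrt{t_0}$. With this adjustment your argument goes through, and the $\tau$-independence of $t_0(M)$ that you emphasize is exactly what the paper also relies on.
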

Then we can determine a unique shift function $\mb{X}(t)$ by ODE \eqref{2.5+} and the local solution $\phi(t,\x):=u(t,\x)-U(\x-\mb{X}(t))$.

\begin{proposition}\label{T3.2}(A priori estimates) For any given $u_\pm$ satisfying \eqref{sc}, there exists a positive constant $\epsilon_0>0$, such that if the Cauchy problem \eqref{2.3} has a solution $\phi\in C([0,T];H^1(\mathbb{R}))\cap L^2(0,T; H^2(\mathbb{R}))$ for some time $T>0$ with
\begin{equation}
\mathcal{N}(T):=\displaystyle \sup_{0\leq t\leq T}\|\phi\|_{H^1(\mathbb{R})}\leq \epsilon_0,  \notag
\end{equation}
then there exists a uniform-in-time positive constant $C_0$ such that for all $t\in[0,T]$,
\begin{equation}\label{3.18}
 \begin{aligned}
\|\phi(t)\|^2_{H^1(\mathbb{R})}+\int_0^t\|\phi_\xi\|^2_{H^1(\mathbb{R})}d\tau+\int_0^t\int_\mathbb{R} {\phi}^2|U^{-\mb{X}}_\xi|d \xi d\tau
\di+\int_0^t|  \mathbf{ \dot  X}(\tau)|^2d\tau\leq C_0\|\phi_0\|^2_{H^1(\mathbb{R})},
 \end{aligned}
\end{equation}
and
\begin{equation}\label{3.19}
| \mathbf{ \dot  X}(t)|\leq C_0\|\phi\|_{L^\infty(\mathbb{R})}.
\end{equation}
\end{proposition}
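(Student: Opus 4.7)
\textbf{Proof proposal for Proposition \ref{T3.2}.}

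The plan is to carry out a weighted $L^2$ energy estimate via the $a$-contraction with shift, upgrade it to $H^1$ by a standard derivative estimate, and finally read off the control of $\dot{\mathbf{X}}$ directly from its definition. I will work in the $\xi$-variable and (after obvious translation) it is equivalent to estimate $\phi^{\mathbf X}$ governed by the nonlocal equation \eqref{pe+}, or $\phi$ governed by \eqref{2.3}; I will use whichever is more convenient at each step.

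First I would compute $\frac{d}{dt}\int_{\RR}a(U^{-\mathbf X})\phi^2\,d\xi$ using \eqref{2.3} for $\phi_t$ and \eqref{2.2} for $U^{-\mathbf X}_t$. After integration by parts, the result splits as
\[
\frac{d}{dt}\int_{\RR}a(U^{-\mathbf X})\phi^2\,d\xi=\dot{\mathbf X}(t)\,\mathcal Y(\phi)+\mathcal B(\phi)+\mathcal G(\phi),
\]
where $\mathcal Y(\phi)$ is the linear functional $2\int a(U^{-\mathbf X})U^{-\mathbf X}_\xi\phi\,d\xi$ (up to a factor), $\mathcal G(\phi)$ collects the manifestly good dissipation pieces $-2\int a(U^{-\mathbf X})\phi_\xi^2 d\xi$ and the contraction gain proportional to $-\int\phi^2|U^{-\mathbf X}_\xi|d\xi$ produced by the algebraic choice \eqref{2.4} (the weight is designed precisely so that $-s a'+a'f'(U)+\text{boundary terms}$ yields a nonpositive quadratic form in $\phi$), and $\mathcal B(\phi)$ contains the higher-order remainder from Taylor-expanding $f(\phi+U^{-\mathbf X})-f(U^{-\mathbf X})-f'(U^{-\mathbf X})\phi$. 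The shift ODE \eqref{2.5} is chosen exactly so that $\dot{\mathbf X}\,\mathcal Y(\phi)=-\frac{(u_--u_+)^2}{2}|\dot{\mathbf X}|^2$, which both cancels the troublesome linear-in-$\dot{\mathbf X}$ term and simultaneously produces a negative $|\dot{\mathbf X}|^2$ contribution that will give the $\int_0^t|\dot{\mathbf X}|^2 d\tau$ bound in \eqref{3.18}.

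The main obstacle is controlling $\mathcal B(\phi)$ for \emph{arbitrarily large} shock amplitude $u_--u_+$. For $f(u)=u^p$ with $p\in[2,4]$ the Taylor remainder is of the form $\phi^2 R_1(U^{-\mathbf X})+\phi^3 R_2(U^{-\mathbf X})+\phi^4 R_3$ (the last only for $p=4$), and the crucial algebraic identity to verify pointwise is that the coefficient of $\phi^2|U^{-\mathbf X}_\xi|$ coming from $a'$, together with $\mathcal G$, dominates $R_1$; this is where the restriction $p\le 4$ and the specific form \eqref{2.4} are essential. The cubic and quartic pieces are then handled by Sobolev/Gagliardo--Nirenberg inequalities, writing $\|\phi\|_{L^\infty}^2\lesssim\|\phi\|\|\phi_\xi\|$, and absorbed into $\mathcal G$ under the smallness hypothesis $\mathcal N(T)\le\epsilon_0$. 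This step produces
\[
\frac{d}{dt}\int a(U^{-\mathbf X})\phi^2 d\xi+c\int\phi^2|U^{-\mathbf X}_\xi|d\xi+c\int\phi_\xi^2 d\xi+c|\dot{\mathbf X}|^2\le 0.
\]

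Next I would derive the $\phi_\xi$ estimate: differentiate \eqref{2.3} in $\xi$, multiply by $\phi_\xi$, and integrate, controlling the shift term $\dot{\mathbf X}U^{-\mathbf X}_{\xi\xi}\phi_\xi$ by $\eta|\dot{\mathbf X}|^2+C\|\phi_\xi\|^2$ and the nonlinear flux term by $\|\phi_\xi\|_{L^\infty}\|\phi_\xi\|^2+\text{controllable}$; picking $\eta$ small and adding a suitably large multiple of the weighted $L^2$ inequality absorbs all remainders for $\epsilon_0$ small, yielding a closed differential inequality that integrates to \eqref{3.18}. Finally, \eqref{3.19} is immediate from \eqref{2.5}, \eqref{2.5+}: since $a(U)$ is bounded and $\int|U_\xi|d\xi=u_--u_+$, one has $|\dot{\mathbf X}(t)|\le\frac{4}{(u_--u_+)^2}\|a\|_\infty\|\phi\|_{L^\infty}\int|U_\xi|d\xi\le C_0\|\phi\|_{L^\infty}$, which completes the proof.
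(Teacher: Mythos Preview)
Your outline has the right architecture (weighted $L^2$ $\to$ unweighted/$H^1$ $\to$ read off $|\dot{\mathbf X}|$ from \eqref{2.5}), and the final two steps are essentially correct. But the heart of the matter---the weighted $L^2$ step---contains a genuine gap: you are missing the nonlinear Poincar\'e inequality (Lemma~\ref{L2.1}), and without it the argument does not close.

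Concretely, your claim that ``the weight is designed precisely so that $-sa'+a'f'(U)+\text{boundary terms}$ yields a nonpositive quadratic form in $\phi$'' is false. After the direct computation the coefficient of $(\phi^{\mathbf X})^2$ in \eqref{2.13} is $\frac{s}{2}a_\xi-\tfrac12 a_{\xi\xi}-\tfrac12 a_\xi f'(U)+\tfrac12 a f''(U)U_\xi$, and this does \emph{not} have a sign; already for Burgers ($p=2$, $a$ constant) it equals $(u_--u_+)U_\xi<0$, i.e.\ the wrong sign. The paper's mechanism is different: one \emph{sacrifices} the entire diffusion $\int a(\phi^{\mathbf X}_\xi)^2$ together with \emph{half} of the shift term $-\dot{\mathbf X}\int a U_\xi\phi^{\mathbf X}$ through the Poincar\'e inequality \eqref{2.11} after the change of variable $y=\frac{U-u_-}{u_+-u_-}$ (see \eqref{2.15}--\eqref{2.18}). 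This conversion produces the extra term $\frac{2a^2}{u_+-u_-}$ in the coefficient, leading to the function $g(U)$ of \eqref{2.20}; only then does the sign become right, and proving $g(U)\le-\beta<0$ (Lemma~\ref{L2.2}) is itself a nontrivial algebraic computation where the restriction $p\in[2,4]$ actually enters. So the $|\dot{\mathbf X}|^2$ term is not merely a dissipation bonus---half of it is structurally required for coercivity.

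A second consequence is that your displayed inequality $\frac{d}{dt}\int a\phi^2+c\int\phi^2|U_\xi|+c\int\phi_\xi^2+c|\dot{\mathbf X}|^2\le0$ cannot hold as stated: the diffusion has been entirely consumed, so no $c\int\phi_\xi^2$ survives in the weighted estimate (cf.\ \eqref{2.32+}). The $\int_0^t\|\phi_\xi\|^2$ bound is recovered by a \emph{separate} unweighted $L^2$ estimate (Proposition~\ref{P2.1++}), where one multiplies \eqref{2.3} by $\phi$ and controls the resulting bad $\phi^2|U_\xi|$ and $|\dot{\mathbf X}|^2$ terms using the already-established weighted inequality. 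Your $H^1$ step (Proposition~\ref{P2.2}) and the bound \eqref{3.19} are fine.
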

%Proposition \ref{T3.1} can be proved in the standard way, and we omit its proof details. 
The proof of Proposition \ref{T3.2} will be given in next section. Based on Propositions \ref{T3.1} and \ref{T3.2}, we can prove Theorem \ref{T1.1} as follows.

\subsection{The continuity argument} By the continuity argument, we can extend the local solution to the global one for all $t\in[0,+\infty)$ as follows.  Define 
 \begin{equation}
 \epsilon^*:=\min \left\{\frac{\epsilon_0}{4}, \sqrt{\frac{\epsilon_0^2}{64C_0}}\right\},\qquad  M:=\frac{\epsilon_0}{4},\notag
\end{equation}
where $\epsilon_0$ and $C_0$ are given in Proposition \ref{T3.2}. By \eqref{1.5},  $\| \phi_0^{\mb X}\|_{H^1(\mathbb{R})}=\| \phi_0\|_{H^1(\mathbb{R})}< \epsilon^*\leq \frac{\epsilon_0}{4}(=M)$, and using the local existence result in Proposition \ref{T3.1}, there exists a positive constant $T_0=T_0(M)$ such that a unique solution on $[0,T_0]$ satisfing $\|\phi^{\mb X}(t,\cdot)\|_{H^1(\mathbb{R})}\leq  \frac{\epsilon_0}{2}$ for $t\in[0,T_0]$. 
%By Sobolev inequality $\| \phi_0^{\mb X}\|_{L^{\infty}(\mathbb{R})} \leq\sqrt 2\|  \phi_0^{\mb X}\|_{H^1(\mathbb{R})}$, we get $\|u_0\|_{L^{\infty}(\mathbb{R})}<C$, by the maximum principle which implies $\|\phi\|_{L^{\infty}(\mathbb{R})}<C$. Hence, \eqref{2.6}-\eqref{2.8} hold for $t\in[0,T_0]$. 
Note that $ \|\phi(t,\cdot)\|_{H^1(\RR)}=\| \phi^{\mb X}(t,\cdot)\|_{H^1(\RR)}\leq \frac{\epsilon_0}{2}$ for $t\in[0,T_0]$.
%Without loss of generality, we can choose $T_0$ small enough such that for all $t\in[0,T_0]$, we have
% \begin{equation*}
%  \begin{aligned}
%\|U &(\cdot-s t-\mathbf X(t))-U(\cdot-s t)\|_{H^1(\RR)}\leq Ct\|U_\xi\|_{H^1(\RR)}\leq \frac{\epsilon_0}{8}.
%\end{aligned}
%\end{equation*}
%Therefore, it holds for $t\in[0,T_0]$ that
 %\begin{equation*}
 %\begin{aligned}
%\di \|\phi(t,\cdot)\|_{H^1(\RR)} \leq &\|U(\cdot-s t- \mathbf X(t))-U(\cdot-s t)\|_{H^1(\RR)}+ \| \phi^{\mb X}(t,\cdot)\|_{H^1(\RR)}\leq \frac{\epsilon_0}{2}+\frac{\epsilon_0}{8}<\epsilon_0.
%\end{aligned}
%\end{equation*}
Especially, since $ \mathbf X(t)$ is absolutely continuous and $\phi^{\mb X}\in C([0,T_0];H^1{(\mathbb{R})})$, we have $\phi\in C([0,T_0];H^1{(\mathbb{R})})$. Consider the maximal existence time:
 \begin{equation}
 T_M:=\sup\left\{t>0\Big|\displaystyle \sup_{\tau\in [0,t]}\|\phi(\tau, \cdot)\|_{H^1(\RR)}\leq \epsilon_0\right\}. \notag
\end{equation}
If $T_M<\infty$, then the continuity argument implies that  $\displaystyle \sup_{\tau\in [0,T_M]}\|\phi(\tau, \cdot)\|_{H^1(\mathbb{R})}=\epsilon_0$. By Proposition \ref{T3.2}, it holds that
 \begin{equation}
\sup_{\tau\in [0,T_M]}\|\phi(\tau, \cdot)\|_{H^1(\mathbb{R})}\leq \sqrt {C_0\|\phi_0\|^2_{H^1(\mathbb{R})}}\leq \frac{\epsilon_0}{8},  \notag
\end{equation}
which contradicts the fact $\displaystyle \sup_{\tau\in [0,T_M]}\|\phi(\tau, \cdot)\|_{H^1(\mathbb{R})}=\epsilon_0$.
Therefore, $$T_M=\infty.$$ By Proposition \ref{P2.1}, we have 

 \begin{equation}\label{1a+}
\frac{d}{dt} \int_{\RR} a\big(U(x-st-\mathbf{X}(t))\big)|u(t, x)-U(x-st-\mathbf{X}(t))|^2dx\leq 0, \forall t>0,
\end{equation}
which verifies $L^2$-contraction in Theorem \ref{T1.1}.
By using Proposition \ref{T3.2} again, we have 
\begin{equation}\label{3-2}
\begin{array}{ll}
\displaystyle \sup_{t>0}\|\phi(t,\cdot)\|^2_{H^1(\RR)}+\int_0^\infty\|\phi_\xi\|^2_{H^1(\RR)}d\tau+\int_0^\infty\int_\mathbb{R} {\phi}^2|U^{-\mb{X}}_\xi|d \xi d\tau\leq C_0\|\phi_0\|^2_{H^1(\RR)},
\end{array}
\end{equation}
and 
\begin{equation}\label{3-3}
| \mathbf{ \dot  X}(t)|\leq C\|\phi(t, \cdot)\|_{L^\infty(\mathbb{R})}, \ \ \ \forall t>0.
\end{equation}

 \subsection{Time-asymptotic behaviors} Now we justify the time-asymptotic behaviors \eqref{1.7} and \eqref{1.8}. First we set $$\di g(t):=\|\phi_\xi(t,\cdot)\|^2.$$From \eqref{3-2}, it is obvious  that $$g(t)\in L^1(0,+\infty).$$ Next we show that $$g^\prime(t)\in L^1(0,+\infty).$$ By \eqref{2.33}-\eqref{2.35}, we have
\begin{align}
\int_0^{+\infty}|g^\prime(t)|dt=&\int_0^{+\infty}\Big|\frac{d}{dt}\|\phi_\xi\|^2\Big|dt\notag\\
\leq &C\int_0^{+\infty}\Big(\|\phi_\xi\|_{H^1(\RR)}^2+|  \mathbf{ \dot  X}(t)|^2+\int_\mathbb{R} {\phi}^2|U^{-\mb{X}}_\xi |d \xi \Big)dt<+\infty.\notag
\end{align}
Hence 
\begin{equation}\label{3.20}
\displaystyle\lim_{t\rightarrow+\infty }g(t)=\displaystyle\lim_{t\rightarrow+\infty} \|\phi_\xi(t,\cdot)\|^2=0.
\end{equation}
By Sobolev inequality, we have 
\begin{equation}\label{3.21}
\displaystyle\lim_{t\rightarrow+\infty}   \|\phi(t, \cdot)\|_{L^{\infty}(\mathbb{R})}\leq\displaystyle\lim_{t\rightarrow+\infty} \sqrt 2\|\phi(t, \cdot)\|^\frac{1}{2}\|\phi_\xi(t, \cdot)\|^\frac{1}{2}=0.         
\end{equation}
By \eqref{3-3} and  \eqref{3.21}, it holds that
\begin{equation}\label{3.22}
 | \mathbf{ \dot  X}(t)|\leq C\|\phi(t, \cdot)\|_{L^\infty(\mathbb{R})}\rightarrow 0, \ \ \ {\rm as} \ \ \ t\rightarrow +\infty.
\end{equation}

 \subsection {Decay estimate} Besides \eqref{1.5}, if $u_0(x)-U(x-x_0)\in L^1(\RR)$, we first show that  $\mathbf{X}(t)-x_0$ is uniformly bounded with respect to $t\in \RR^+$.
Rewrite $u(t,x)-U(x-s t - \mathbf{X}(t))$ as follows:
\begin{equation}\label{I3}
  \begin{array}{ll}
\di u(t,x)-U(x-s t - \mathbf{X}(t))=&\di \underbrace{u(t,x)-U(x-s t-x_0) }_{I_1(t,x)}\\
&\di +\underbrace{U(x-s t-x_0)-U(x-s t - \mathbf{X}(t))}_{I_2(t,x)}.\\
\end{array}
\end{equation}
By $L^1$-contraction for solutions to the scalar viscous conservation laws \cite{DS}, we have 
\begin {equation}\label{I8}
   \begin{aligned}
   \|I_1(t,\cdot)\|_{ L^1(\mathbb{R})}\leq \|u_0(\cdot)-U(\cdot-x_0)\|_{L^1(\RR)}.
     \end{aligned}
\end{equation}
Inspired by \cite{KV}, we denote $\tau=\tau(t):=\mathbf{X}(t)-x_0$ and
\begin {equation}\label{I9}
\|I_2(t,\cdot)\|^2_{ L^2(\mathbb{R})}=\int_{\RR}|U(\x-\tau)-U(\x)|^2d\x:=R(\tau).\notag
\end {equation}
Then we have
\begin {equation*}
   \begin{array}{ll}
\di \frac{\partial{R}}{\partial \tau}(\tau)&\di=-2\int_{\RR}[U(\x-\tau)-U(\x)]\frac{\partial U(\x-\tau)}{\partial \x} d\x\\[3mm]
&\di =-2\int_{\RR}\int_{\x}^{\x-\tau}\frac{\partial U(y)}{\partial y}dy\frac{\partial U(\x-\tau)}{\partial \x} d\x\\[4mm]
&\di =2\int_{\RR}\int_{x}^{x+\tau}(U)^\prime(y)(U)^\prime(x)dydx,
 \end{array}
\end{equation*}
which is strictly positive for $\tau>0$.
Furthermore, if $\tau>1$, then we have
$$
\frac{\partial{R}}{\partial \tau}(\tau)\geq2\int_{\RR}\int_x^{x+1}(U)^\prime(y)(U)^\prime(x)dydx:=\beta>0.
$$
Hence, for $\tau>1$
$$
R(\tau)\geq R(1)+\beta(\tau-1)\geq\beta(\tau-1),
$$
and then it holds that
$$
1<\tau\leq\frac{R(\tau)}{\beta}+1.
$$ 
Since $\di R(\tau)=\int_{\RR}|U(\x-\tau)-U(\x)|^2d\x=\int_{\RR}|U(\x)-U(\x+\tau)|^2d\x=R(-\tau)$,  that is,  $R(\tau)$ is an even function with respect to $\tau\in \mathbb{R}$, we have $\forall \tau<-1,$
   $$
R(\tau)\geq R(1)+\beta(-\tau-1)=R(1)+\beta(|\tau|-1)\geq \beta(|\tau|-1).
$$
On the other hand, it is obvious that for $\tau\in[-1,1]$,
$$
R(\tau)\geq 0>\beta(|\tau|-1).
$$
Therefore, we have $\forall \tau \in \RR$, 
\begin {equation}\label{I10}
|\tau|\leq \frac{R(\tau)}{\beta}+1,\notag
\end {equation}
where $\di \beta:=2\int_{\RR}\int_x^{x+1}(U)^\prime(y)(U)^\prime(x)dydx>0.$
Equivalently, it holds that $\forall t \in \RR^+$, 
\begin {equation}\label{I11}
|\mathbf{X}(t)-x_0|\leq\frac{ \|I_2(t,\cdot)\|^2_{ L^2(\mathbb{R})}}{\beta}+1.
\end {equation}
Using the inequality $(a+b)^2\geq a^2-2|ab|$ and \eqref{3-2}, \eqref{I8}, we can get
\begin {equation}\label{I13}
   \begin{aligned}
  & \|I_2(t,\cdot)\|^2_{ L^2(\mathbb{R})}\leq 2\int_{\RR}|I_1(t,x)||I_2(t,x)|dx+\int_{\RR}\big(I_1(t,x)+I_2(t,x)\big)^2dx\\
  & \leq 2\|I_2(t,\cdot)\|_{ L^\infty(\mathbb{R})}\|I_1(t,\cdot)\|_{ L^1(\mathbb{R})}+\int_{\RR}|u(t,x)-U(x-s t - \mathbf{X}(t))|^2dx\\
  & = 2\|I_2(t,\cdot)\|_{ L^\infty(\mathbb{R})}\|I_1(t,\cdot)\|_{ L^1(\mathbb{R})}+\int_{\RR}\phi^2(t,\xi)d\xi\\
   &\leq 2|u_--u_+|\|u_0(\cdot)-U(\cdot-x_0)\|_{L^1(\RR)}+C_0\|\phi_0\|^2_{H^1(\RR)}.
\end{aligned}
\end{equation}
where in the last inequality we use the fact that $\|I_2(t,\cdot)\|_{ L^\infty(\mathbb{R})}\leq|u_--u_+|$.
By \eqref{I11} and \eqref{I13}, we get that $\mathbf{X}(t)-x_0$ is uniformly bounded and satisfies 
\begin {equation}\label{I14}
  |\mathbf{X}(t)-x_0|\leq \frac{1}{\beta} \Big(2|u_--u_+|\|u_0(\cdot)-U(\cdot-x_0)\|_{L^1(\RR)}+C_0\|\phi_0\|_{H^1(\RR)}\Big)+1.
\end{equation}
Since $U(x)$ is a monotone decreasing function, we have
\begin {equation*}
\begin{aligned}
 \|I_2(t,\cdot)\|_{ L^1(\mathbb{R})}&=sgn(\tau(t)) \int_{\RR}\big(U(\x-\tau(t))-U(\x)\big)d\x\\
& =sgn(\tau(t))  \int_{\RR}\int_0^{-\tau(t)} \partial_yU(\x+y)dyd\x\\
&=-\int_{\RR}\int_0^{|\tau(t)|} \partial_yU(\x+y)dyd\x.
  \end{aligned}
\end{equation*}
Then, using Fubini's theorem to get
\begin {equation}\label{I15}
\|I_2(t,\cdot)\|_{ L^1(\mathbb{R})}=|\mathbf{X}(t)-x_0||u_--u_+|.
\end {equation}
By \eqref{I8}, \eqref{I14} and \eqref{I15}, we have 
\begin {equation}\label{I16}
   \begin{aligned}
\|\phi(t,\cdot)\|_{ L^1(\mathbb{R})}= \|(I_1+I_2)(t,\cdot)\|_{ L^1(\mathbb{R})}\leq \|I_1(t,\cdot)\|_{ L^1(\mathbb{R})}+\|I_2(t,\cdot)\|_{ L^1(\mathbb{R})}\leq C_1,
\end{aligned}
\end{equation}
where $$
\begin{array}{ll}
\di C_1=\|u_0(\cdot)-U(\cdot-x_0)\|_{L^1(\RR)}+|u_--u_+|\Big[\frac{1}{\beta} \Big(\di 2|u_--u_+|\|u_0(\cdot)-U(\cdot-x_0)\|_{L^1(\RR)}\\[3mm]
\di \hspace{8.5cm} +C_0\|\phi_0\|_{H^1(\RR)}\Big)+1\Big].
\end{array}$$
 Then Gagliardo-Nirenberg intepolation inequality   shows that
 $$\|\phi(t,\cdot)\|_{ L^2(\mathbb{R})}\leq C \|\phi(t,\cdot)\|^{\frac{2}{3}}_{ L^1(\mathbb{R})} \|\partial_x \phi(t,\cdot)\|^{\frac{1}{3}}_{ L^2(\mathbb{R})}.$$ 
By \eqref{I16}, we have 
  \begin {equation}\label{I17} 
  \begin{aligned}
 \|\phi(t,\cdot)\|^3_{ L^2(\mathbb{R})} \leq C \|\phi(t,\cdot)\|^2_{ L^1(\mathbb{R})} \|\partial_x\phi(t,\cdot)\|_{ L^2(\mathbb{R})}\leq C_2\|\partial_x\phi(t,\cdot)\|_{ L^2(\mathbb{R})}.
 \end{aligned}
  \end {equation}
 From the proof of Proposition \ref{P2.1} and  Proposition \ref{P2.1++} (see \eqref{2.31+}) and \eqref{2.16++},  for positive constant $C$ we have
  \begin {equation}\label{-17+} 
  \frac{d}{dt} \int_{\RR}(Ca+1)(\phi^{\mb{X}})^2d\xi \leq -2\int_{\RR}(\phi^{\mb{X}}_\xi)^2d \xi
 \end {equation}
 Since  $0<c< a^{-\mb{X}}< C$ for positive constants $c$ and $C$, by \eqref{I17} and  \eqref{-17+}, we have 
 \begin {equation}\label{-18} 
  \begin{aligned}
 \frac{d}{dt} \|\phi(t,\cdot)\sqrt {Ca^{-\mb{X}}+1}\|^2_{ L^2(\mathbb{R})}&\leq -2\|\partial_x\phi(t,\cdot)\|^2_{ L^2(\mathbb{R})}\nonumber\\
&\leq -\frac{2}{(C_2)^2}  \|\phi(t,\cdot)\|^6_{ L^2(\mathbb{R})}  \leq -C_3\|\phi(t,\cdot)\sqrt {Ca^{-\mb{X}}+1}\|^6_{ L^2(\mathbb{R})}\notag,
 \end{aligned}
  \end {equation} 
 which implies the decay estimate 
    \begin {equation}\label{-19} 
  \|\phi(t,\cdot)\sqrt {Ca^{-\mb{X}}+1}\|^4_{ L^2(\mathbb{R})}\leq\frac{\|\phi_0\sqrt {Ca^{-\mb{X}}+1}\|^4_{ L^2(\mathbb{R})}}{1+2C_3t\|\phi_0\sqrt {Ca^{-\mb{X}}+1}\|^4_{ L^2(\mathbb{R})}}.
  \end {equation}
  By \eqref{-19} and  the inequality $2(a+b)^\frac{1}{4}\geq a^\frac{1}{4}+b^\frac{1}{4}$, we have
    \begin {equation}\label{-20} 
  \|\phi(t,\cdot)\|_{ L^2(\mathbb{R})}\leq\frac{C_*\|\phi_0\|_{ L^2(\mathbb{R})}}{1+C_*t^\frac{1}{4}\|\phi_0\|_{ L^2(\mathbb{R})}},
  \end {equation}
 for some uniform-in-time positive constant $C_*$. %:=C(1+\|u_0-U\|_{L^1(\RR)}+\|u_0-U\|_{H^1(\RR)})$ and $C=C(u_+,u_-,p )$.
% Furthermore, by  \eqref{3.21}, \eqref{-20} and \eqref{22}, we have 
%$$
 %\|\phi(t,\cdot)\|_{ L^\infty(\mathbb{R})}=O(t^{-\frac{1}{8}}).
%$$
 Thus  we complete the proof of Theorem \ref{T1.1}.

\section{Uniform-in-time a priori estimates}
In this section, we prove Proposition \ref{T3.2} for the uniform-in-time a priori estimates. To do this, we assume that the Cauchy problem \eqref{2.3} has a solution $ \phi\in C([0,T];H^1(\RR))\cap L^2(0,T; H^2(\RR))$ for some constant $T>0$. First we  prove $L^2$ relative entropy estimate for $\phi$ as follows. %Then we give the $L^2$-estimate for $\phi_\xi$.
 \begin{proposition}\label{P2.1} There exists a positive constant $\epsilon_1>0$, such that if $\mathcal{ N}(T):=\displaystyle \sup_{0\leq t\leq T}\|\phi(t,\cdot)\|_{H^1(\RR)} \leq\epsilon_1$, then  $\forall t\in[0,T],$ it holds that
 \begin{align}\label{2.9}
&\di  \frac{d}{dt} \int_{\RR}(\phi^{\mb{X}})^2  a d\xi\leq 0,\\
&\di \|\phi(t,\cdot)\|_{ L^2(\mathbb{R})}^2%+\int_0^t\|\phi_\xi(\tau, \cdot)\|_{ L^2(\mathbb{R})}^2d\tau\nonumber\\
%&\di \quad 
+\int_0^t\int_\mathbb{R} \big|U^{-\mb{X}}_\xi\big| {\phi}^2d \xi d\tau+ \int_0^t|  \mathbf{ \dot  X}(\tau)|^2d\tau\leq C\|\phi_0\|_{ L^2(\mathbb{R})}^2,
\end{align}
where the positive constant $C$ is independent of $T$.
  \end{proposition}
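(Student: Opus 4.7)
I would follow the $a$-contraction framework. The starting point is to multiply the non-local perturbation equation \eqref{pe+} by $2a(U)\phi^{\mb X}$ and integrate over $\RR$. Since $a=a(U(\x))$ is time-independent, the left-hand side yields $\frac{d}{dt}\int a(\phi^{\mb X})^2\,d\x$. After integrating the viscous and flux terms by parts, Taylor-expanding
\[
f(U+\phi^{\mb X})-f(U) = f'(U)\phi^{\mb X} + \tfrac12 f''(U)(\phi^{\mb X})^2 + \mathcal{O}((\phi^{\mb X})^3),
\]
and using the profile identities $U_{\x\x}=(f'(U)-s)U_\x$ and $a_\x=a'(U)U_\x$, one obtains a schematic identity
\[
\frac{d}{dt}\!\int\! a(\phi^{\mb X})^2\,d\x = -2\!\int\! a(\phi^{\mb X}_\x)^2\,d\x + \int\!\mathcal{K}(U)(\phi^{\mb X})^2\,d\x + \mathcal{S} + \mathcal{R},
\]
where $\mathcal{K}(U)=U_\x\bigl[2a'(f'(U)-s)-af''(U)\bigr]+a''U_\x^2$ is the quadratic form weighted by $U_\x$, $\mathcal{S}$ collects the non-local shift contributions, and $\mathcal{R}$ gathers cubic remainders from the flux expansion.

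The shift ODE \eqref{2.5+} is engineered so that $\mathcal{I}:=\int aU_\x\phi^{\mb X}d\x = -\tfrac{(u_--u_+)^2}{4}\dot{\mathbf X}$; substituting this back turns $\mathcal{S}$ into $-\tfrac{(u_--u_+)^2}{2}|\dot{\mathbf X}|^2$ plus a ``cross'' piece $\mathcal{S}_{\mathrm{cross}}$ proportional to $\dot{\mathbf X}\cdot\!\int a'U_\x(\phi^{\mb X})^2 d\x$. For $p=2$ the weight \eqref{2.4} is constant, $a'\equiv 0$, and $\mathcal{S}_{\mathrm{cross}}$ vanishes identically, recovering the classical Burgers setup of \cite{KV}. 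The heart of the argument is then a weighted Poincar\'e-type inequality of the form
\[
\int\!\mathcal{K}(U)(\phi^{\mb X})^2 d\x + \mathcal{S}_{\mathrm{cross}} + c\!\int|U_\x|(\phi^{\mb X})^2 d\x + c|\dot{\mathbf X}|^2 \;\leq\; (2-c)\!\int\! a(\phi^{\mb X}_\x)^2 d\x + \bigl(\tfrac{(u_--u_+)^2}{2}-c\bigr)|\dot{\mathbf X}|^2,
\]
valid \emph{uniformly} in the shock amplitude. Via the explicit form of the weight \eqref{2.4} and $f(u)=u^p$, this reduces to an algebraic inequality on $U\in(u_+,u_-)$; verifying it uniformly for arbitrarily large shocks is the main technical obstacle of the proof and is precisely the step that forces the polynomial-degree restriction $2\leq p\leq 4$. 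The mechanism decomposes $\phi^{\mb X}$ into its $U_\x$-component (controlled by $\dot{\mathbf X}$ through the shift ODE) and an $a$-weighted orthogonal complement (controlled by the parabolic dissipation).

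Once the weighted Poincar\'e inequality is in place, the cubic remainder $\mathcal{R}$ is bounded by $\|\phi^{\mb X}\|_{L^\infty}$ times a combination of $\int a(\phi^{\mb X}_\x)^2 d\x$ and $\int|U_\x|(\phi^{\mb X})^2 d\x$; Sobolev embedding together with the a priori smallness $\mathcal{N}(T)\leq\epsilon_1$ allows $\mathcal{R}$ to be absorbed into the good terms for $\epsilon_1$ sufficiently small. Combining the three ingredients yields the differential inequality
\[
\frac{d}{dt}\!\int\! a(\phi^{\mb X})^2\,d\x + c\!\int\! a(\phi^{\mb X}_\x)^2 d\x + c\!\int|U_\x|(\phi^{\mb X})^2 d\x + c|\dot{\mathbf X}(t)|^2 \leq 0,
\]
which delivers the pointwise $a$-contraction \eqref{2.9}. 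Integrating in $t$ and using $0<c\leq a\leq C$ together with the translation invariance $\|\phi\|_{L^2}=\|\phi^{\mb X}\|_{L^2}$ and $|U^{-\mb X}_\x|=|U_\x(\cdot-\mathbf{X}(t))|$ then transfers the dissipation back to the unshifted variable $\phi$, yielding the stated bound.
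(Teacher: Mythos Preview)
Your plan matches the paper's argument: multiply by $a\phi^{\mb X}$, use the shift ODE to produce $-\tfrac{(u_--u_+)^2}{2}|\dot{\mathbf X}|^2$, apply a Poincar\'e inequality, and reduce the sign condition to a pointwise algebraic inequality on $(u_+,u_-)$ (the paper's Lemma~\ref{L2.2}, which is exactly where $2\le p\le 4$ enters). Two small corrections to your write-up are worth making. First, the Poincar\'e step in the paper is concrete: after the change of variable $y=\frac{U-u_-}{u_+-u_-}$ one applies Lemma~\ref{L2.1} to $\psi=a\phi^{\mb X}$, and this consumes the \emph{entire} diffusion $\int a(\phi^{\mb X}_\x)^2$ --- there is no leftover $c\int a(\phi^{\mb X}_\x)^2$ in the final differential inequality (cf.\ \eqref{2.31+}); the $\|\phi_\x\|$ control is obtained separately in Proposition~\ref{P2.1++}. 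Second, the cross piece $\mathcal S_{\mathrm{cross}}=\tfrac12\dot{\mathbf X}\int a_\x(\phi^{\mb X})^2$ is not part of the Poincar\'e inequality; since $|\dot{\mathbf X}|\lesssim\|\phi\|_{L^\infty}$ it is effectively cubic and is absorbed by smallness exactly like your $\mathcal R$ (see \eqref{2.32}). With these adjustments your outline coincides with the paper's proof.
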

  In order to prove Proposition \ref{P2.1}, we need to use a Poincar${\acute{\rm e}}$ type inequality and weighted energy method. Thus, we start with a Poincar${\acute{\rm e}}$ type inequality.
\begin{lemma}\label{L2.1} (\cite{VKM}) For any $f: [0,1]\rightarrow\mathbb{R} $ satisfying $\di \int_0^1 y(1-y)|f^\prime(y)|^2dy<+\infty,$ it holds that
  \begin{equation}\label{2.10}
\int_0^1 \Big|f-\int_0^1 fdy\Big|^2dy\leq\frac{1}{2}\int_0^1y(1-y)|f^\prime(y)|^2dy,
  \end{equation}
  that is,
    \begin{equation}\label{2.11}
\int_0^1 f^2(y)dy-\Big(\int_0^1 f(y)dy\Big)^2\leq\frac{1}{2}\int_0^1y(1-y)|f^\prime(y)|^2dy.
   \end{equation} 
  \end{lemma}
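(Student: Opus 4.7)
The plan is to reduce the weighted Poincaré inequality \eqref{2.11} to a sharp Hardy-type inequality on $H^1_0(0,1)$ via integration by parts and a factorization. First I would set $g(y) := f(y) - \int_0^1 f(z)\, dz$, so that $\int_0^1 g\, dy = 0$ and $g' = f'$, and introduce the primitive $G(y) := \int_0^y g(s)\, ds$, which satisfies $G(0) = G(1) = 0$ thanks to the vanishing mean. One integration by parts yields
$$\int_0^1 g^2\, dy = [gG]_0^1 - \int_0^1 g'\,G\, dy = -\int_0^1 f'(y)\, G(y)\, dy,$$
and a weighted Cauchy–Schwarz, splitting $f' = [\sqrt{y(1-y)}\,f']\cdot[1/\sqrt{y(1-y)}]$, produces
$$\int_0^1 g^2\, dy \leq \Bigl(\int_0^1 y(1-y)|f'|^2\, dy\Bigr)^{1/2}\Bigl(\int_0^1 \frac{G^2}{y(1-y)}\, dy\Bigr)^{1/2}.$$

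The key auxiliary ingredient is the sharp Hardy-type inequality
$$\int_0^1 \frac{G(y)^2}{y(1-y)}\, dy \leq \frac{1}{2}\int_0^1 |G'(y)|^2\, dy \qquad \forall\, G \in H^1_0(0,1).$$
I would prove this via the substitution $G(y) = y(1-y)\,H(y)$, admissible whenever the right-hand side is finite. Expanding $|G'|^2 = (1-2y)^2 H^2 + 2(1-2y)y(1-y)HH' + y^2(1-y)^2 |H'|^2$ and using $2HH' = (H^2)'$ together with integration by parts on the cross term (the boundary contribution vanishes because $(1-2y)y(1-y) = 0$ at $y=0,1$), one arrives at the simple identity $(1-2y)^2 - [(1-2y)y(1-y)]' = 2y(1-y)$, which leads to
$$\int_0^1 |G'|^2\, dy = 2\int_0^1 y(1-y) H^2\, dy + \int_0^1 y^2(1-y)^2 |H'|^2\, dy.$$
Since $\int_0^1 G^2/(y(1-y))\, dy = \int_0^1 y(1-y) H^2\, dy$, the Hardy inequality collapses to the manifest non-negativity $\int y^2(1-y)^2 |H'|^2\, dy \geq 0$, with equality iff $H$ is constant, i.e., $G$ is a scalar multiple of $y(1-y)$.

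Combining the two pieces, I would insert the Hardy estimate $\int G^2/(y(1-y))\, dy \leq \tfrac{1}{2}\int g^2\, dy$ into the Cauchy–Schwarz bound, then square both sides and cancel a factor of $\int g^2\, dy$ to obtain \eqref{2.11}; the equivalent form \eqref{2.10} follows by expanding the square of $f - \int_0^1 f\, dy$. The main obstacle is precisely the sharp constant $\tfrac{1}{2}$ in the Hardy step — any looser constant would degrade the final Poincaré constant — and the factorization $G = y(1-y) H$ is what removes the singular endpoint weight and converts the inequality into a manifest non-negativity while simultaneously pinpointing the extremisers ($f$ affine, $g$ proportional to $1-2y$), which saturate both \eqref{2.10} and \eqref{2.11} and confirm sharpness.
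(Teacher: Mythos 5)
Your argument is correct, and it is worth noting at the outset that the paper itself gives no proof of Lemma \ref{L2.1}: it simply cites \cite{VKM}, where the inequality is usually obtained from the spectral gap of the Legendre operator $-\bigl(y(1-y)f'\bigr)'$ on $[0,1]$ (first nonzero eigenvalue $2$, eigenfunction affine), or from an equivalent expansion argument. Your route is a genuinely different and more elementary one: the reduction $\int_0^1 g^2\,dy=-\int_0^1 f'G\,dy$ with $G(y)=\int_0^y g$, the weighted Cauchy--Schwarz split, and the sharp Hardy-type inequality $\int_0^1 G^2/(y(1-y))\,dy\le \tfrac12\int_0^1|G'|^2\,dy$ proved by the factorization $G=y(1-y)H$. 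I checked the algebra: $(1-2y)^2-\bigl[(1-2y)y(1-y)\bigr]'=2y(1-y)$ is right, the cross term does integrate away, and since $G'=g$ the Hardy bound feeds back into the Cauchy--Schwarz estimate to give $\bigl(\int g^2\bigr)^2\le\tfrac12\int y(1-y)|f'|^2\cdot\int g^2$, whence \eqref{2.11} after cancellation; the extremizer $f$ affine saturates both steps, confirming the constant $\tfrac12$ is sharp. Two small points deserve a sentence each in a polished write-up. First, the boundary terms in your integrations by parts do not vanish merely because $(1-2y)y(1-y)=0$ at the endpoints, since $H$ may blow up there; one should note that $G(y)^2\le y\int_0^y|G'|^2\to o(y)$ as $y\to0$ (and symmetrically at $y=1$), so $y(1-y)H^2=G^2/(y(1-y))\to0$, and likewise $gG\to0$ because the hypothesis forces $|g(y)|\lesssim\sqrt{\log(1/(y(1-y)))}$. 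Second, the cancellation of $\int g^2$ requires $0<\int g^2<\infty$; finiteness follows from the same logarithmic bound, and the case $\int g^2=0$ is trivial. With these standard justifications (or a density argument starting from smooth $f$), your proof is complete and self-contained, which is arguably an improvement over the bare citation in the paper.
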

%  We first define the weight function
%   \begin{equation}\label{c1}
%a(U(\x)):=\frac{(u_--u_+)h(U)}{(U-u_+)(U-u_-)}=\frac{f(U)-f(u_-)}{U-u_-}-\frac{f(U)-f(u_+)}{U-u_+}.
% \end{equation}
% To keep part of dissipation $\di\int_{\RR}\phi_\x^2d\x$, we choose a new weight function 
%\begin{equation}\label{a-a}
%a_\al(U(\x)):=\alpha  a(U(\x))
%\end{equation}
% where the parameter $\alpha\in(0,1]$ and the weight function $a$ is defined in \eqref{c1}. %It is clear that $a(U(\x))= \alpha_0 \hat a(U(\x))=\bar a(\alpha_0, U(\x))$. 
%Define the shift function $\mathbf{X}(t)$ as a solution to the ODE:
%\begin{equation}\label{c2}
%\left\{\begin{array}{l}
%\di \dot{\mathbf{X}}(t)=-\frac{4}{(u_--u_+)^2}\int_{\mathbb{R}} \phi(t,\x) a_\al(U^{-\mb{X}}(\x)) U^{-\mb{X}}_\xi(\x) d\xi,\\[4mm]
%\mathbf{X}(0)=x_0,\quad \forall x_0\in \mathbb{R}.
%\end{array}\right. 
%\end{equation}   

\

%Recalling the weight function $a_\alpha(U(\xi)):=\alpha a(U(\x))$ with  $a(U(\x))$ defined in \eqref{2.4} and the parameter $\alpha\in(0,1]$ to retain part of the viscous dissipation term $\di\int_{\RR}\phi_\x^2d\x$, and the time-dependent function $\mb{X}(t)$ defined in \eqref{2.5} or \eqref{2.5+}.
 For simplicity, we denote $a^{-\mb X}:=a(U^{-\mb{X}}(\x))=a\big(U(\x-\mb{X}(t))\big)$.
Multiplying the equation $\eqref{2.3}_1$ by $ a^{-\mb X}\phi$,  we can get
 \begin{equation}\label{2.12}
 \begin{aligned}
 \left[\frac{1}{2}\phi^2 a^{-\mb{X}}\right]_t&-s\phi_\xi\phi a^{-\mb{X}}+\big[f(\phi+U^{-\mb{X}})-f(U^{-\mb{X}} )\big]_\xi \phi a^{-\mb{X}}\\[2mm]
 &+\frac{1}{2}\mathbf{ \dot  X}(t)\phi^2 (a^{-\mb{X}})_\xi-\mathbf{\dot X}(t)U^{-\mb{X}}_\xi\phi a^{-\mb{X}}= \phi_{\xi\xi}\phi a^{-\mb{X}}.
 \end{aligned}
\end{equation}
Integrating \eqref{2.12} over $\RR$ with respect to $\xi$ and changing variable $\xi\rightarrow \xi-\mathbf{  X}(t)$ and denoting $\phi^{\mb{X}}:=\phi(t,\xi+\mathbf{ X}(t))$, we have
 \begin{equation}\label{2.13}
 \begin{aligned}
& \left[\frac{1}{2}\int_{\RR}(\phi^{\mb{X}})^2a d\xi\right]_t+\int_{\RR}(\phi^{\mb{X}})^2\left[\frac{s}{2}a_\x-\frac{1}{2}a_{\x\x}-\frac{a_\x}{2}f^\prime(U)+\frac{a f^{\prime\prime}(U)}{2}U_\x\right]d\x  \\[2mm]
 &\qquad \qquad\qquad  +\mathbf{ \dot  X}(t)\left[\frac{1}{2}\int_{\RR}(\phi^{\mb{X}})^2a_\xi d\x -\int_{\RR}\phi^{\mb{X}} a U_\xi d\x\right]\\[2mm]
&\qquad \qquad\qquad  +\int_{\RR} a(\phi^{\mb{X}}_\x)^2d\x+ \int_{\RR}O(1)(\phi^{\mb{X}})^3 U_\x d\x=0.
 \end{aligned}
\end{equation}
where we have used the fact
 \begin{equation}\label{2.14}
 \begin{aligned}
& \int_{\RR}  a^{-\mb{X}} \phi [f(\phi+U^{-\mb{X}})-f(U^{-\mb{X}} )]_\xi d\x\\
& = \int_{\RR} a \phi^{\mb{X}} [f(\phi^{\mb{X}}+U)-f(U )]_\xi d\x
\\& = \int_{\RR} a U_\x[f(\phi^{\mb{X}}+U)-f(U)-f^\prime(U)\phi^{\mb{X}}]d\x \\
&\quad +\int_{\RR} a_\x\Big[\int_U^{\phi^{\mb{X}}+U}f(\eta)d\eta-\phi^{\mb{X}}f(\phi^{\mb{X}}+U)\Big]d\x\\
& = \int_{\RR} (\phi^{\mb{X}})^2U_\x\Big[\frac{a f^{\prime\prime}(U)}{2}-\frac{a^\prime f^\prime(U)}{2}\Big]d \x+\int_{\RR} O(1)(\phi^{\mb{X}})^3 U_\x d\x.
  \end{aligned}
\end{equation}

 Let  $$y:=\frac{U(\xi)-u_-}{u_+-u_-},$$ then we have $\xi\in(-\infty,+\infty)\iff y \in(0,1)$. Since $y_\x=\frac{U_\x}{u_+-u_-}>0$,  there exists a unique inverse function $\x=\x(y)$ by the inverse function theorem. For any fixed $t>0$, we denote  
 \begin{equation}
 \psi(t,y):= \phi^{\mb{X}}(t,\x(y)) a(U(\x(y))).
 \end{equation}
In order to use the weighted Poincar${\acute{\rm e}}$ inequality with $\psi(t,y)$, we first have
\begin{equation}\label{2.15}
 \begin{aligned}
-\frac{{\mathbf{ \dot  X}(t)}}{2}\int_{\RR}\phi^{\mb{X}} a U_\xi d\x=2\Big(\int_{\RR} \phi^{\mb{X}} a \frac{U_\xi}{u_+-u_-}d\xi\Big)^2= 2\Big(\int_0^1\psi(t,y) dy\Big)^2\\
\geq2 \int_0^1 \psi^2(t,y)dy-\int_0^1 |\psi_y(t,y)|^2y(1-y)dy.
 \end{aligned}
\end{equation}
Furthermore, we have 
\begin{equation}\label{2.16}
2\int_0^1\psi^2(t,y)dy=\frac{2}{u_+-u_-}\int_{\RR} (\phi^{\mb{X}})^2a^2U_\xi d\xi,
\end{equation}
and 
\begin{equation}\label{2.17}
 \begin{aligned}
\di \int_0^1   |\psi_y(t,y)|^2y(1-y)dy&\di =\frac{1}{u_+-u_-} \int_{\RR}\big[(\phi^{\mb{X}} a)_\x\big]^2\frac{(U-u_-)(u_+-U)}{U_\xi}d\xi\\[3mm]
\di 
 &\di =\int_{\RR}\frac{[(\phi^{\mb{X}} a)_\x] ^2}{ a}d\xi \\
 &\di =\int_{\RR} a (\phi^{\mb{X}}_\x)^2d\x-\int_{\RR}\Big(a_{\x\x}-\frac{ a_\x^2}{a}\Big)(\phi^{\mb{X}})^2d\x.
 \end{aligned}
\end{equation}
By \eqref{2.15}-\eqref{2.17}, we have 
\begin{equation}\label{2.18}
 \begin{aligned}
-\frac{{\mathbf{ \dot  X}(t)}}{2}\int_{\RR}\phi^{\mb{X}} a U_\xi d\x+ \int_{\RR}a \big(\phi^{\mb{X}}_\x\big)^2d\x\geq  \int_{\RR}\left[\frac{2a^2U_\x}{u_+-u_-}+ a_{\x\x}-\frac{ a_\x^2}{a}\right](\phi^{\mb{X}})^2d\x.
 \end{aligned}
\end{equation}
By \eqref{2.4}, we have $a_\x=a^\prime(U)U_\x$ and $a_{\x\x}=a^\prime(U)(f^\prime(U)-s)U_\x+a^{\prime\prime}(U)U_\x^2$. Then from \eqref{2.13} and \eqref{2.18}, we can get
\begin{equation}\label{2.19}
 \begin{aligned}
& \left[\frac{1}{2}\int_\RR(\phi^{\mb{X}})^2 ad\x\right]_t+\int_{\RR}(\phi^{\mb{X}})^2U_\x\bigg[\frac{2a^2}{u_+-u_-}+\frac{a f^{\prime\prime}(U)}{2}+\Big({\frac{a^{\prime\prime}}{2}}-\frac{(a^\prime)^2}{a}\Big)h(U)\bigg]d\x  \\[2mm]
&\quad +\frac{1}{2}\mathbf{ \dot  X}(t)\int_{\RR}(\phi^{\mb{X}})^2a_\xi d\x+\frac{(u_--u_+)^2}{8}|{\mathbf{ \dot  X}(t)}|^2+\int_{\RR}O(1)\big|(\phi^{\mb{X}})^3 U_\x\big| d\x \leq 0,
 \end{aligned}
\end{equation}
where we have used the fact 
$$
-\frac{{\mathbf{ \dot  X}(t)}}{2}\int_{\RR}\phi^{\mb{X}}a U_\xi d\x=\frac{(u_--u_+)^2}{8}|{\mathbf{ \dot  X}(t)}|^2.
$$
Note that
%\begin{equation}\label{2.20+}
 %\begin{aligned}
%&\di\frac{2a^2}{u_+-u_-}+\frac{a f^{\prime\prime}(U)}{2}+\left[{\frac{a^{\prime\prime}}{2}}-\frac{(a^\prime)^2}{a}\right]h(U)\\
%&\di =  \alpha\bigg\{\frac{2 \alpha a^2}{u_+-u_-}+\frac{  af^{\prime\prime}(U)}{2}+(\alpha-1)  a^\prime (f^\prime(U)-s)\\
%&\di \hspace{6cm} +\left[{(\alpha-\frac{1}{2})  a^{\prime\prime}}-\frac{\alpha(a^\prime)^2}{a}\right]h(U)\bigg\}\\
%&\di :=\alpha g(\alpha, U).
 %\end{aligned}
%\end{equation}  

\begin{equation}\label{2.20}
g(U)=\frac{2a^2}{u_+-u_-}+\frac{ af^{\prime\prime}(U)}{2}+\left[\frac{ a^{\prime\prime}}{2}-\frac{(a^\prime)^2}{a}\right]h(U).  
\end{equation}  
By direct calculations, we can obtain
\begin{equation}\label{2.21}
\begin{array}{ll}
\di g(U)=-\frac{|u_+-u_-|}{(U-u_-)^2(u_+-U)^2}\Big[(U-u_-)(u_+-U)\big(hh^{\prime\prime}-(h^\prime)^2\big)\\[4mm]
\di \hspace{6cm} +3h^2+hh^\prime(u_++u_--2U)\Big]\\
\di \qquad\quad :=-\frac{|u_+-u_-|}{(U-u_-)^2(u_+-U)^2} m(U),
\end{array}
\end{equation}  
where
\begin{equation}\label{m}
m(U):=(U-u_-)(u_+-U)(hh^{\prime\prime}-(h^\prime)^2)+3h^2+hh^\prime(u_++u_--2U).
\end{equation}
 and $h=h(U)$ is defined in \eqref{1.4}.

 %similar to \eqref{2.19}, 

 %Multiplying \eqref{2.3} by $a^{-\mb{X}}\phi$, 

\begin{lemma}\label{L2.2} There exist positive constants $\beta=\beta(u_+,u_-,p )$ such that $\forall U\in[u_+,u_-],$ it holds that $g(U)\leq -\beta<0$.
\end{lemma}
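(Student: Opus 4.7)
The plan is to factor out the double zero of $h$ at $u_\pm$, use a logarithmic-derivative identity to collapse $m(U)$ into a transparent product, and verify uniform positivity of the resulting rational factor on $[u_+, u_-]$.

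First, since $h(u_\pm) = 0$ and $h(U) < 0$ on $(u_+, u_-)$ by \eqref{1.4}, I would write $h(U) = -(U - u_+)(u_- - U)\, q(U)$ with $q \in C^\infty([u_+, u_-])$ and $q > 0$. For $f(U) = U^p$, $q$ is explicit: $q \equiv 1$ when $p = 2$, $q(U) = U + u_+ + u_-$ when $p = 3$, and $q(U) = U^2 + U(u_+ + u_-) + u_+^2 + u_+ u_- + u_-^2$ when $p = 4$; non-integer $p \in [2, 4]$ is handled by an integral representation of divided differences. In particular, the weight \eqref{2.4} satisfies $a = (u_- - u_+)\, q(U)$.

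Setting $\alpha := U - u_+$ and $\gamma := u_- - U$, so that $(U - u_-)(u_+ - U) = \alpha \gamma$ and $u_+ + u_- - 2U = \gamma - \alpha$, the identity $h h'' - (h')^2 = h^2 (h'/h)'$ combined with $h'/h = \alpha^{-1} - \gamma^{-1} + (\ln q)'$ and the algebraic cancellation $-\alpha\gamma(\alpha^2 + \gamma^2) + 3\alpha^2 \gamma^2 + \alpha \gamma (\gamma - \alpha)^2 = \alpha^2 \gamma^2$ collapses the definition \eqref{m} to
\begin{equation*}
m(U) = \alpha^2 \gamma^2\, q(U)^2\, \mathcal{K}(U), \qquad \mathcal{K}(U) := 1 + \alpha \gamma\, (\ln q)'' + (\gamma - \alpha)(\ln q)'.
\end{equation*}
Since $(U - u_-)^2 (u_+ - U)^2 = \alpha^2 \gamma^2$, formula \eqref{2.21} reduces to $g(U) = -(u_- - u_+)\, q(U)^2\, \mathcal{K}(U)$, and the lemma is equivalent to proving the uniform lower bound $\mathcal{K}(U) \ge c > 0$ on $[u_+, u_-]$.

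For $p = 2$, $q \equiv 1$ gives $\mathcal{K} \equiv 1$, recovering the classical Burgers case with no weight. For $p \in (2, 4]$ I would clear the denominator $q^2$ and verify that the resulting polynomial in $U$, $u_\pm$, $p$ is strictly positive on $[u_+, u_-]$. The endpoint values are explicit and manifestly positive: for $p = 4$, for instance,
\begin{equation*}
\mathcal{K}(u_-) = \frac{2 u_+ (2 u_- + u_+)}{3 u_-^2 + 2 u_+ u_- + u_+^2} > 0,
\end{equation*}
and a symmetric expression holds at $u_+$. The interior estimate is the main obstacle: the negative contribution from $\alpha \gamma (\ln q)''$ must be dominated by the positive terms $1$ and $(\gamma - \alpha)(\ln q)'$, and it is precisely here that the hypothesis $p \le 4$ is used, consistent with the viscous destabilization phenomenon of \cite{BC}. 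Once strict positivity is secured, continuity on the compact interval yields the uniform constant $c$, and we conclude $g(U) \le -(u_- - u_+)\, c\, \min_{[u_+, u_-]} q^2 =: -\beta < 0$.
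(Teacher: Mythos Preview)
Your logarithmic-derivative identity is correct and yields the clean factorization $g(U)=-(u_--u_+)\,q(U)^2\,\mathcal{K}(U)$; this is a genuinely different decomposition from the paper's. The paper instead writes $m=I_1+I_2$ with $I_1=[h+(u_--U)h'][h+(u_+-U)h']$ and $I_2=(U-u_-)(u_+-U)hh''+2h^2$, then bounds each bracket of $I_1$ below in absolute value by a multiple of $h''(U)(u_\pm-U)^2$ via Taylor-type inequalities. The second of these bounds is precisely where $p\in[2,4]$ enters, through the monotonicity on $(1,\infty)$ of the explicit function $H(z)=\tfrac{(p-1)(p-4)}{4}z^p+\tfrac{3p-p^2}{2}z^{p-1}+\tfrac{p(p-1)}{4}z^{p-2}-1$ in the variable $z=U/u_+$. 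With $I_1>\tfrac{(h'')^2}{8}(u_+-U)^2(u_--U)^2$ in hand, the paper completes the square to get $m>2\big[h+\tfrac14 h''(U-u_-)(u_+-U)\big]^2>0$.

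The gap in your proposal is that you do not actually carry out the interior estimate for $\mathcal{K}$. You correctly flag it as ``the main obstacle'' and say you would ``clear the denominator $q^2$ and verify that the resulting polynomial \ldots\ is strictly positive,'' but for non-integer $p\in(2,4)$ the function $q$ (the second divided difference $f[u_+,u_-,U]$) is not a polynomial, so there is no polynomial identity to check; and even for $p=3,4$ you supply nothing beyond the endpoint evaluations. Since the entire content of the lemma is the strict positivity of $m$ (equivalently, of $\mathcal{K}$) on the open interval, your reformulation has relocated the difficulty rather than resolved it. To complete the argument along your lines you would need a concrete mechanism---playing the role of the paper's $H(z)$ analysis---that exploits $p\le 4$ to force $1+\alpha\gamma(\ln q)''+(\gamma-\alpha)(\ln q)'>0$ uniformly for $U\in(u_+,u_-)$.
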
 
\begin{proof}
 %\textbf{\underline{Step 1}}. We first prove that $g(U)\leq -\beta<0$, $\forall U\in[u_+,u_-]$.
 
By \eqref{2.4}, we have
$$a(u_+)=s-f^\prime(u_+)=\frac{f(u_+)-f(u_-)}{u_+-u_-}-f^\prime(u_+)>0.$$
By \eqref{2.20}, we have 
\begin{equation}\label{2.22}
 \begin{aligned}
g(u_+)&=2 a(u_+)\Big[\frac{f^{\prime\prime}(u_+)}{4}+\frac{ a(u_+)}{u_+-u_-}\Big]\\
&=\frac{2 a(u_+)}{u_--u_+}\Big[\frac{f^{\prime\prime}(u_+)(u_--u_+)}{4}+f^\prime(u_+)-\frac{f(u_+)-f(u_-)}{u_+-u_-}\Big]\\
&=-\frac{2 a(u_+)}{(u_--u_+)^2}\Big[f(u_-)-f(u_+)-f^\prime(u_+)(u_--u_+)-\frac{f^{\prime\prime}(u_+)}{4}(u_--u_+)^2\Big]\\
&=-\frac{2 a(u_+)}{(u_--u_+)^2}\Big[\frac{f^{\prime\prime}(u_+)}{4}(u_--u_+)^2+\frac{f^{\prime\prime\prime}(\theta)}{6}(u_--u_+)^3\Big]<0,
 \end{aligned}
\end{equation}
where $\theta\in(u_+,u_-)$.
On the other hand, by \eqref{2.4},
$$a(u_-)=f^\prime(u_-)-s=f^\prime(u_-)-\frac{f(u_+)-f(u_-)}{u_+-u_-}>0,$$
 then we have
\begin{equation}\label{2.23}
 \begin{aligned}
g(u_-)&=2a(u_-)\Big[\frac{f^{\prime\prime}(u_-)}{4}+\frac{ a(u_-)}{u_+-u_-}\Big]\\
&=2a(u_-)\left[u_-^{p -2}\Big(\frac{p ^2-5p +4}{4}+\frac{(2-p )u_+}{u_--u_+}+\frac{u_+^2}{(u_--u_+)^2}\Big)-\frac{u_+^p }{(u_--u_+)^2}\right]\\
&\leq 2 a(u_-)\left[u_-^{p -2}\Big(\frac{(2-p )u_+}{u_--u_+}+\frac{u_+^2}{(u_--u_+)^2}\Big)-\frac{u_+^p }{(u_--u_+)^2}\right]\\
&=\frac{2a(u_-)}{(u_--u_+)^2}\underbrace{\Big[(2-p )u_+(u_--u_+)u_-^{p -2}+u_+^2\big(u_-^{p -2}-u_+^{p-2}\big) \Big]}_{:=l_1(u_-,u_+)}<0,
 \end{aligned}
\end{equation}
where we have used the fact that $\forall p\in [2, 4], \forall 0<u_+<u_-, $
$$l_1(u_-,u_+)<0.$$
%In fact, since 
%$$
%\frac{d}{du_-} g_1(u_-,u_+)=-(p-2)(p-1)(u_--u_+)u_+u_-^{p -3}<0,
%$$
%then
%$$
%g_1(u_-,u_+)<g_1(u_+,u_+)\equiv0.
%$$

Next, we claim that 
\begin{equation}\label{claim}
\forall U\in(u_+, u_-),\quad  g(U)<0.
\end{equation}

To prove the claim \eqref{claim}, by \eqref{m}, we first have
\begin{equation}\label{2.24}
 \begin{aligned}
m(U)=\underbrace{[h+(u_--U)h^\prime][h+(u_+-U)h^\prime]}_{I_1}+\underbrace{(U-u_-)(u_+-U)hh^{\prime\prime}+2h^2}_{I_2}.
\end{aligned}
\end{equation}
First we estimate $I_1$. Since $\forall U\in(u_+,u_-)$, we have
\begin{equation}\label{2.25}
 \begin{aligned}
h(U)+(u_--U)h^\prime(U)&=-[h(u_-)-h(U)-(u_--U)h^\prime(U)]\\
&=-\frac{h^{\prime\prime}(U)}{2}(u_--U)^2-\frac{h^{\prime\prime\prime}(\bar U)}{6}(u_--U)^3\\
&\leq-\frac{h^{\prime\prime}(U)}{2}(u_--U)^2<0, 
\end{aligned}
\end{equation}
where $\bar U\in[U,u_-]$ and then $h^{\prime\prime\prime}(\bar U)=f^{\prime\prime\prime}(\bar U)>0, \forall p \in[2,4]$. 

%Second, we prove that $h(U)+(u_+-U)h^\prime(U)<-\frac{h^{\prime\prime}(U)}{4}(u_+-U)^2<0, \forall U\in(u_+,u_-)$. 

On the other hand, we can calculate that
\begin{equation}\label{2.26}
 \begin{aligned}
&h(U)+(u_+-U)h^\prime(U)+\frac{h^{\prime\prime}(U)}{4}(u_+-U)^2\\
&=f^\prime(U)(u_+-U)+f(U)-f(u_+)+\frac{f^{\prime\prime}(U)}{4}(u_+-U)^2\\[3mm]
&=u_+^p \left[\frac{(p -1)(p -4)}{4}{\Big(\frac{U}{u_+}\Big)}^p +\frac{3p -p ^2}{2}{\Big(\frac{U}{u_+}\Big)}^{p -1}+\frac{p (p -1)}{4}{\Big(\frac{U}{u_+}\Big)}^{p -2}-1\right].
\end{aligned}
\end{equation}

Set $z:=\frac{U}{u_+}$.  Then we have $U\in(u_+,u_-)\iff z\in(1,\frac{u_-}{u_+})$. Define
$$H(z):=\frac{(p -1)(p -4)}{4}z^{p }+\frac{3p -p ^2}{2}z^{p -1}+\frac{p (p -1)}{4}z^{p -2}-1.$$
It is obvious to compute that 
$$
H^\prime(z)=\frac{p (p -1)}{4}z^{p -3}\Big[(p -4)z^2+2(3-p )z+(p -2)\Big].
$$
 Since $\forall z\in(1,\frac{u_-}{u_+}), $ it holds that $(p -4)z^2+2(3-p )z+(p -2)<0$. Hence, $\forall p \in[2,4]$, we can get $H^\prime(z)<0, \forall z\in(1,\frac{u_-}{u_+})$. Therefore, we have $H(z)<H(1)=0, \forall z\in(1,\frac{u_-}{u_+}),$ which and \eqref{2.26} imply that  $\forall p \in[2,4]$,
\begin{equation}\label{2.27}
 \begin{aligned}
h(U)+(u_+-U)h^\prime(U)<-\frac{h^{\prime\prime}(U)}{4}(u_+-U)^2<0,\quad  \forall U\in(u_+,u_- ).
\end{aligned}
\end{equation} 

Therefore, by \eqref{2.25} and \eqref{2.27}, we have $\forall p \in[2,4]$,  $\forall U\in(u_+,u_-),$
\begin{equation}\label{2.28}
I_1=[h+(u_--U)h^\prime][h+(u_+-U)h^\prime]>\frac{(h^{\prime\prime})^2}{8}(u_+-U)^2(u_--U)^2.
\end{equation} 
By \eqref{2.24} and \eqref{2.28}, we have $\forall p \in[2,4]$,
\begin{equation}\label{2.29}
 \begin{aligned}
m(U)=I_1+I_2>\frac{(h^{\prime\prime})^2}{8}(u_+-U)^2(u_--U)^2+(U-u_-)(u_+-U)hh^{\prime\prime}+2h^2,\\
=2\left[h+\frac{h^{\prime\prime}(U)(U-u_-)(u_+-U)}{4}\right]^2> 0, \quad \forall U\in(u_+,u_- ).\ \ 
\end{aligned}
\end{equation} 
Hence,  by \eqref{2.21}, we proved the claim \eqref{claim}. From \eqref{2.22} and \eqref{2.23}, we have $g(u_\pm)<0$. By the properties of continuous function over closed interval, there exists a positive constant $\beta=\beta(u_-, u_+, p)$, such that $$g(U)\leq -\beta, \quad \forall U\in[u_+,u_-].$$

%\textbf{\underline{Step 2}}. We aim to prove that there exists a positive constant $\alpha_0=\alpha_0(u_+,u_-,p)\in(0,1)$ such that $$g(\alpha_0,U)\leq -\frac{\beta}{2}<0, \quad \forall U\in[u_+,u_-].$$
%We first denote 
%\begin{equation}\label{2.30+}
 %\begin{aligned}
%l(\alpha,U):=\frac{2 \alpha a^2}{u_+-u_-}+\frac{ af^{\prime\prime}(U)}{2}+(\alpha-1)  a^\prime (f^\prime(U)-s)+\left[{(\alpha-\frac{1}{2}) a^{\prime\prime}}-\frac{\alpha( a^\prime)^2}{ a}\right]h(U).
% \end{aligned}
%\end{equation}  
%From \eqref{2.20} and \eqref{2.21}, we have 
%$$
%|g(1,U)-g(\alpha,U)|\leq\|g_\alpha\|_{L^\infty}(1-\alpha),
%$$
%where $$g_\alpha=g_\alpha(\alpha,U)=\frac{2a^2}{u_+-u_-}-a^\prime \big(f^\prime(U)-s\big)+\left[a^{\prime\prime}-\frac{(a^\prime)^2}{a}\right]h(U).$$ By Step 1, we have 
%$$
%g(\alpha,U)\leq \|g_\alpha\|_{L^\infty}(1-\alpha)+g(1,U)\leq  \|
%g_\alpha\|_{L^\infty(\RR)}(1-\alpha)-\beta.
%$$
%Therefore, we can choose $\alpha=\alpha_0$ with
%\begin{equation}\label{a0}
%alpha_0:=\max\left\{ \frac{1}{2}, 1-\frac{\beta}{2\|g_\alpha\|_{L^\infty(\RR)}}\right\}\in (0,1)
%\end{equation}
%such that $g(\alpha_0,U)\leq-\frac{\beta}{2}<0$. Thus, there exists a positive constant $\alpha_0=\alpha_0(u_+,u_-,p)<1$ such that $\alpha_0g(\alpha_0,U)\leq -\frac{\beta}{2}\alpha_0<0$, $\forall U\in[u_+,u_-]$.
\end{proof}

%We now choose the weight function $a_{\al_0}(U(\x)):=\al_0 a(U(\xi))$ defined in \eqref{2.4} and \eqref{a0} and  the shift function ${\mathbf{X}}(t)$ defined in \eqref{2.5}.  
Thus, by \eqref{2.19}, we have
\begin{equation}\label{2.31+}
 \begin{aligned}
& \big[\frac{1}{2}\int_{\RR}(\phi^{\mb{X}})^2  a d\xi \big]_t+ \beta\int_{\RR}(\phi^{\mb{X}})^2|U_\x| d\x 
+\frac{1}{2}\mathbf{ \dot  X}(t)\int_{\RR}(\phi^{\mb{X}})^2a_\xi d\x
\\
&+\frac{(u_--u_+)^2}{8}|{\mathbf{ \dot  X}(t)}|^2+\int_{\RR}O(1)(\phi^{\mb{X}})^3 U_\x d\x \leq 0.
 \end{aligned}
\end{equation} 

On the other hand, we have 
 \begin{equation}\label{2.32}
  \begin{aligned}
&\left|\frac{1}{2}\mathbf{ \dot  X}(t)\int_{\RR}(\phi^{\mb{X}})^2a_\xi d\x\right|\\
&\leq \frac{4}{(u_--u_+)^2}\|\phi^{\mb{X}}\|_{L^\infty(\RR)}\|a\|_{L^\infty(\RR)}\|a^\prime\|_{L^\infty(\RR)}\int_{\RR} |U_\x| d\x \int_{\mathbb{R}} (\phi^{\mb{X}})^2|U_\xi | d\xi\\
&\leq C\|\phi\|_{H^1(\RR)} \int_{\mathbb{R}} (\phi^{\mb{X}})^2|U_\xi|  d\xi\leq C\epsilon_1 \int_{\mathbb{R}} (\phi^{\mb{X}})^2|U_\xi|  d\xi.
 \end{aligned}
\end{equation}
By \eqref{2.31+} and \eqref{2.32}, we have 
 \begin{equation}\label{2.32+}
  \begin{aligned}
& \left[\frac{1}{2}\int_{\RR}(\phi^{\mb{X}})^2  a d\xi\right]_t + \Big(\beta-C\epsilon_1\Big) \int_{\RR}(\phi^{\mb{X}})^2|U_\x| d\x+\frac{(u_--u_+)^2}{8}|{\mathbf{ \dot  X}(t)}|^2 \leq 0.
 \end{aligned}
\end{equation}

 Integrating \eqref{2.32+} with respect to $t$, then changing of variable $\x\rightarrow \x-\mathbf{ X}(t)$, and choosing the suitable smallness of $\epsilon_1$, we can prove Proposition \ref{P2.1}.
 
  \begin{proposition}\label{P2.1++} There exists a positive constant $\epsilon_2>0$, such that if $\mathcal{ N}(T):=\displaystyle \sup_{0\leq t\leq T}\|\phi(t,\cdot)\|_{H^1(\RR)} \leq\epsilon_2$, then  $\forall t\in[0,T]$, it holds that
 \begin{align}\label{2.9++}
&\di \|\phi(t,\cdot)\|_{ L^2(\mathbb{R})}^2+\int_0^t\|\phi_\xi(\tau, \cdot)\|_{ L^2(\mathbb{R})}^2d\tau\nonumber\\
&\di \quad 
+\int_0^t\int_\mathbb{R} \big|U^{-\mb{X}}_\xi\big| {\phi}^2d \xi d\tau+ \int_0^t|  \mathbf{ \dot  X}(\tau)|^2d\tau\leq C\|\phi_0\|_{ L^2(\mathbb{R})}^2,
\end{align}
where the positive constant $C$ is independent of $T$.
  \end{proposition}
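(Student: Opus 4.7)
The new piece of information in \eqref{2.9++} relative to \eqref{2.9} is the dissipation $\int_0^t\|\phi_\x\|^2\,d\tau$, which is absent from Proposition \ref{P2.1} because the weighted Poincar\'e inequality \eqref{2.11} consumed the entire diffusive term $\int a(\phi^{\mb X}_\x)^2\,d\x$ to obtain the coercive $|U_\x|$-weighted bound. The plan is to recover it by performing a plain (unweighted) $L^2$ estimate for the nonlocal equation \eqref{pe+} and adding the result to \eqref{2.9}. The diffusion now provides the full $\|\phi^{\mb X}_\x\|^2$ on the left-hand side, and the remaining source terms on the right-hand side will be dominated by quantities already known to be integrable in time by Proposition \ref{P2.1}.

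Multiplying \eqref{pe+} by $\phi^{\mb X}$ and integrating in $\x$, the symmetric transport terms and the $\phi^{\mb X}\phi^{\mb X}_\x$ piece of the nonlocal shift vanish, reducing the shift contribution to $-\dot{\mb X}(t)\int U_\x\phi^{\mb X}\,d\x$. For the flux term, an integration by parts combined with the Taylor decomposition $f(\phi+U)-f(U)=f'(U)\phi+G(\phi,U)$, $G=O(\phi^2)$, gives
\begin{equation*}
-\!\int[f(\phi^{\mb X}+U)-f(U)]\phi^{\mb X}_\x\,d\x
=\!\int\tfrac{f''(U)U_\x}{2}(\phi^{\mb X})^2\,d\x+\!\int H_U(\phi^{\mb X},U)\,U_\x\,d\x,
\end{equation*}
where $H(\phi,U):=\int_0^\phi G(s,U)\,ds$ is $O(\phi^3)$ uniformly in $U\in[u_+,u_-]$, so the last integral is bounded by $C\|\phi\|_{L^\infty}\!\int|U_\x|(\phi^{\mb X})^2\,d\x\le C\epsilon_2\!\int|U_\x|(\phi^{\mb X})^2\,d\x$ via the Sobolev embedding. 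Cauchy--Schwarz against the measure $|U_\x|$ yields $|\int U_\x\phi^{\mb X}\,d\x|^2\le(u_--u_+)\!\int|U_\x|(\phi^{\mb X})^2\,d\x$, so Young's inequality bounds the shift term as $|\dot{\mb X}\!\int U_\x\phi^{\mb X}|\le\varepsilon|\dot{\mb X}|^2+C_\varepsilon\!\int|U_\x|(\phi^{\mb X})^2\,d\x$.

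Combining the above produces
\begin{equation*}
\tfrac{d}{dt}\tfrac12\|\phi^{\mb X}\|^2+\|\phi^{\mb X}_\x\|^2\le C\!\int|U_\x|(\phi^{\mb X})^2\,d\x+\varepsilon|\dot{\mb X}(t)|^2.
\end{equation*}
Integrating on $[0,t]$, changing variables $\x\mapsto\x-\mb X(t)$ to pass from $\phi^{\mb X}$ to $\phi$, and using the $L^1_t$ bounds on $\int|U^{-\mb X}_\x|\phi^2\,d\x$ and $|\dot{\mb X}|^2$ already provided by Proposition \ref{P2.1} controls the right-hand side by $C\|\phi_0\|_{L^2}^2$; this yields $\int_0^t\|\phi_\x\|^2\,d\tau\le C\|\phi_0\|_{L^2}^2$, and adding to \eqref{2.9} gives \eqref{2.9++}. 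The main obstacle I anticipate is the cubic remainder in the flux term: a naive bound $C\|\phi\|_{L^\infty}\|\phi\|\|\phi_\x\|$ would leave $C\epsilon_2^2\|\phi\|^2$ on the right-hand side, uniformly bounded in $t$ but not $L^1_t$; packaging the remainder into the antiderivative $H(\phi,U)$ so that a single integration by parts converts it into a $|U_\x|$-weighted cubic is the key device that closes the estimate uniformly in $T$.
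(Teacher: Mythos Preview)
Your proposal is correct and follows essentially the same route as the paper: multiply the perturbation equation by $\phi^{\mb X}$ (the paper multiplies \eqref{2.3} by $\phi$ and then shifts variables, which is equivalent), integrate by parts so that the flux nonlinearity becomes a $U_\xi$-weighted quadratic plus a $U_\xi$-weighted cubic remainder (your antiderivative $H$ trick is exactly the unweighted case of the paper's identity \eqref{2.14}), bound the shift term via Young/Cauchy--Schwarz, and then feed the right-hand side into the time-integrated bounds of Proposition~\ref{P2.1}. Your anticipation of the cubic-remainder obstacle and its resolution matches the paper's handling precisely.
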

  \begin{proof}
 Multiplying the equation $\eqref{2.3}_1$ by $ \phi$,  we can get
 \begin{equation}\label{2.12++}
 \begin{aligned}
 \left(\frac{\phi^2}{2} \right)_t&-s\phi_\xi\phi +\big[f(\phi+U^{-\mb{X}})-f(U^{-\mb{X}} )\big]_\xi \phi -\mathbf{\dot X}(t)U^{-\mb{X}}_\xi\phi = \phi_{\xi\xi}\phi.
 \end{aligned}
\end{equation}
Integrating \eqref{2.12++} over $\RR$ with respect to $\xi$ and changing variable $\xi\rightarrow \xi-\mathbf{  X}(t)$ and denoting $\phi^{\mb{X}}:=\phi(t,\xi+\mathbf{ X}(t))$, we have
 \begin{equation}\label{2.13++}
 \begin{aligned}
& \left[\frac{1}{2}\int_{\RR}(\phi^{\mb{X}})^2d \xi\right]_t+\int_{\RR}(\phi^{\mb{X}})^2\frac{f^{\prime\prime}(U)}{2}U_\x d\x  -\mathbf{ \dot  X}(t) \int_{\RR}\phi^{\mb{X}} U_\xi d\x\\[2mm]
&\qquad \qquad\qquad  +\int_{\RR} (\phi^{\mb{X}}_\x)^2d\x+ \int_{\RR}O(1)(\phi^{\mb{X}})^3 U_\x d\x=0.
 \end{aligned}
\end{equation}
where we have used the fact
 \begin{equation}\label{2.14++}
 \begin{aligned}
& \int_{\RR}   \phi [f(\phi+U^{-\mb{X}})-f(U^{-\mb{X}} )]_\xi d\x\\
& = \int_{\RR}  [f(\phi^{\mb{X}}+U)-f(U )]_\xi d\x
\\& = \int_{\RR} U_\x[f(\phi^{\mb{X}}+U)-f(U)-f^\prime(U)\phi^{\mb{X}}]d\x \\
& = \int_{\RR} (\phi^{\mb{X}})^2U_\x\frac{ f^{\prime\prime}(U)}{2}+ \int_{\RR}O(1)(\phi^{\mb{X}})^3 U_\x d\x.
  \end{aligned}
\end{equation}
By Cauchy inequality and H${ \ddot{\rm o}}$lder inequality, we have 
 \begin{equation}\label{2.15++}
 \begin{aligned}
 -\mathbf{ \dot  X}(t) \int_{\RR}\phi^{\mb{X}} U_\xi d\x&\leq\frac{|\mathbf{ \dot  X}(t)|^2 }{2}+\frac{\left(\int_{\RR}\phi^{\mb{X}} U_\xi d\x\right)^2}{2}\\
& \leq \frac{|\mathbf{ \dot  X}(t)|^2 }{2}+ \frac{|u_--u_+|}{2}\int_{\RR}(\phi^{\mb{X}})^2 U_\xi d\x
  \end{aligned}
\end{equation}
 Hence, it holds that
  \begin{equation}\label{2.16++}
 \begin{aligned}
  \left[\frac{1}{2}\int_{\RR}(\phi^{\mb{X}})^2 d\xi \right]_t+\int_{\RR} (\phi^{\mb{X}}_\x)^2d\x\leq C\left[\int_{\RR}(\phi^{\mb{X}})^2 |U_\xi|d\x+|\mathbf{ \dot  X}(t)|^2\right].
  \end{aligned}
\end{equation}
Integrating \eqref{2.16++} with respect to $\xi$, and  combining with  Proposition \ref{P2.1} we complete the proof.
 \end{proof}

 \begin{proposition}\label{P2.2} %For given $u_+>0$ and the far field states $u_\pm$ satisfying $u_->u_+$, 
There exists a positive constant $\epsilon_3>0$, such that if $\mathcal{ N}(T):=\displaystyle \sup_{0\leq t\leq T}\|\phi(t,\cdot)\|_{H^1(\RR)} \leq\epsilon_3$, 
 then there exists a uniform-in-time positive constant $C$ such that for all $t\in[0,T]$, it holds that
 \begin{equation}\label{22}
\|\phi_\xi (t,\cdot)\|^2+\int_0^t\|\phi_{\xi\xi}(\tau, \cdot)\|^2d\tau\leq C\|\phi_0\|_{H^1(\mathbb{R})}^2.%\notag
\end{equation}
    \end {proposition}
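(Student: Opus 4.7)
The natural strategy is a high-order energy estimate obtained by multiplying the perturbation equation \eqref{2.3} by $-\phi_{\x\x}$ and integrating over $\mathbb{R}$. After one integration by parts in $\xi$ on the time-derivative term and using that $\int_{\RR}\phi_\x\phi_{\x\x}d\x=0$, the time-evolution identity takes the schematic form
\begin{equation}
\frac{1}{2}\frac{d}{dt}\|\phi_\x\|^2+\|\phi_{\x\x}\|^2=\mathbf{\dot X}(t)\!\int_{\RR}U^{-\mb X}_\x\phi_{\x\x}\,d\x+\!\int_{\RR}\bigl[f(\phi+U^{-\mb X})-f(U^{-\mb X})\bigr]_\x\phi_{\x\x}\,d\x.\notag
\end{equation}
The plan is to absorb the $\phi_{\x\x}$'s into the diffusion term via Cauchy--Schwarz, and then show the remaining right-hand side is integrable in time thanks to Propositions \ref{P2.1} and \ref{P2.1++}.

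For the shift term I would apply Cauchy--Schwarz and use $\|U^{-\mb X}_\x\|_{L^2}\leq C$ to get $\frac18\|\phi_{\x\x}\|^2+C|\mathbf{\dot X}(t)|^2$. For the flux term I would expand $[f(\phi+U^{-\mb X})-f(U^{-\mb X})]_\x=f'(\phi+U^{-\mb X})\phi_\x+\bigl[f'(\phi+U^{-\mb X})-f'(U^{-\mb X})\bigr]U^{-\mb X}_\x$. Because $2\le p\le 4$ and $\|\phi\|_{L^\infty}\lesssim\|\phi\|_{H^1}\le\epsilon_3$ keeps $\phi+U^{-\mb X}$ in a compact range bounded away from anything singular, $f'$ and $f''$ are bounded, so the first piece is controlled by $\frac18\|\phi_{\x\x}\|^2+C\|\phi_\x\|^2$, and the second, using $|f'(\phi+U^{-\mb X})-f'(U^{-\mb X})|\leq C|\phi|$, yields $\frac18\|\phi_{\x\x}\|^2+C\|U^{-\mb X}_\x\|_{L^\infty}\!\int_{\RR}\phi^2|U^{-\mb X}_\x|\,d\x$. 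Absorbing the three $\tfrac18\|\phi_{\x\x}\|^2$ contributions into the diffusion term gives
\begin{equation}
\frac{d}{dt}\|\phi_\x\|^2+\|\phi_{\x\x}\|^2\le C\|\phi_\x\|^2+C|\mathbf{\dot X}(t)|^2+C\!\int_{\RR}\phi^2|U^{-\mb X}_\x|\,d\x.\notag
\end{equation}

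Integrating this inequality in time from $0$ to $t$ concludes the argument: each term on the right is already controlled in $L^1(0,T)$ by $C\|\phi_0\|_{H^1}^2$ through Proposition \ref{P2.1++} (for $\int_0^t\|\phi_\x\|^2d\tau$) and Proposition \ref{P2.1} (for $\int_0^t|\mathbf{\dot X}|^2d\tau$ and the weighted term $\int_0^t\!\int_{\RR}\phi^2|U^{-\mb X}_\x|d\x d\tau$). Thus no Gronwall loop on $\|\phi_\x\|^2$ is required, and one directly obtains \eqref{22}.

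The step I expect to be the most delicate is bookkeeping the flux nonlinearity for the full range $p\in[2,4]$: one has to verify that the $L^\infty$ smallness of $\phi$ together with boundedness of $U$ really does make $f'(\phi+U^{-\mb X})$ and the Lipschitz constant of $f'$ on the relevant range uniformly bounded, independent of the (possibly large) shock amplitude $|u_--u_+|$. Once the uniform bounds on $f',f''$ are in hand, the rest of the estimate is a routine Cauchy--Schwarz and time integration, combined with the already-established Propositions \ref{P2.1} and \ref{P2.1++}.
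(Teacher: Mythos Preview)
Your proposal is correct and follows essentially the same approach as the paper: multiply \eqref{2.3} by $-\phi_{\x\x}$, use Cauchy--Schwarz on the shift term and on the two pieces of the expanded flux derivative, absorb into the diffusion, and integrate in time invoking Propositions~\ref{P2.1} and~\ref{P2.1++}. The paper's estimates \eqref{2.33}--\eqref{2.35} are exactly your scheme, and your worry about the $p\in[2,4]$ bookkeeping is unfounded---since $U\in(u_+,u_-)$ and $\|\phi\|_{L^\infty}$ is small, $f'$ and $f''$ are simply bounded polynomials on a fixed compact interval.
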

  \begin{proof} Multiplying \eqref{2.3} by $-\phi_{\xi\xi}$ and integrating the resulted equation with respect to $\xi$, we can obtain
   \begin{equation}\label{2.33}
 \frac{1}{2}\frac{d}{dt}\| \phi_\xi\|^2+\|\phi_{\xi\xi}\|^2  -\mathbf{ \dot X}(t)\int_{\mathbb{R}}\phi_{\xi\xi}U^{-\mb{X}}_\xi d\xi=\int_{\mathbb{R}}\Big(f(\phi+U^{-\mb{X}})-f(U^{-\mb{X}})\Big)_\xi \phi_{\xi\xi}d\xi.
  \end{equation}
  On one hand, we have 
 \begin{equation}\label{2.34}
 \begin{aligned}
&  \int_{\mathbb{R}}\Big|\Big(f(\phi+U^{-\mb{X}})-f(U^{-\mb{X}})\Big)_\xi\Big|^2d\xi \\
& \leq 2\int_{\mathbb{R}} \Big|\Big(f^\prime(\phi+U^{-\mb{X}})-f^\prime(U^{-\mb{X}})\Big)U^{-\mb{X}}_\xi\Big|^2d\xi+ 2\int_{\mathbb{R}} \Big|f^\prime(\phi+U^{-\mb{X}})\phi_\xi \Big|^2 d\xi\\
 &\leq C \int_{\mathbb{R}}\phi^2 (U^{-\mb{X}}_\xi)^2d\xi+C\|\phi_\xi\|^2 \leq C\int_{\mathbb{R}}\phi^2 |U^{-\mb{X}}_\xi |d\xi+C\|\phi_\xi\|^2.
   \end{aligned}
    \end{equation}
On the other hand, we have 
 \begin{equation}\label{2.35}
 \begin{aligned}
\Big|\mathbf{ \dot X}(t)\int_{\mathbb{R}}\phi_{\xi\xi}U^{-\mb{X}}_\xi d\xi \Big| \leq \frac{1}{2}\|\phi_{\xi\xi}\|^2+ C|\mathbf{ \dot X}(t)|^2.
   \end{aligned}
    \end{equation}
By Proposition \ref{P2.1++} and \eqref{2.33}-\eqref{2.35}, we prove Proposition \ref{P2.2}.
\end{proof}
Therefore, Proposition \ref{P2.1++} and Proposition \ref{P2.2} complete the proof of Proposition \ref{T3.2} by .

 \section{Appendix}
{ \bf The proof of Proposition  \ref{2.1}}. Since both the non-local equation \eqref{pe+} for  $\phi^{\mb X}$ and the initial value $\phi^{\mb X}_0(\xi)$ in \eqref{in} are independent of the definition of the shift function $\mb X(t)$, we will omit the superscript  $\mb X$ for simplicity in this subsection. For any initial time $\tau\geq 0$, rewrite the Cauchy problem \eqref{aaa}  as
 \begin{equation}\label{-aaa}
\left\{\begin{array}{l}
\displaystyle\phi_t-s \phi_{\x}+\frac{4(\phi_\xi+U_\x)}{(u_--u_+)^2}\int_{\mathbb{R}} a(U) U_\xi \phi  d\xi+\big[f(\phi+ U)-f(U)\big]_\x  =\phi_{\x\x},\quad t>\tau,\\[3mm]
 \phi(t=\tau,\xi)= \phi_\tau(\xi).
\end{array}\right. 
\end{equation}
%\begin{proof} 
For any $M>0$ with $\|\phi_\tau\|_{H^1(\mathbb{R})}\leq M$,
define the solution space on the time interval $[\tau, \tau+t_0]$ for $t_0\geq 0$ as
$$Y_{\tau, t_0}^M:= \bigg\{\phi\in C([\tau, \tau+t_0];H^1(\RR))\cap L^2(\tau,\tau+t_0; H^2(\RR))\bigg |\sup_{\tau\leq t\leq \tau+t_0} \|\phi\|_{H^1(\RR)}\leq M\bigg\}.$$ Now we define a mapping $\mathcal{T}$ on the solution space $Y_{\tau, t_0}^{2M}$. For any 
$\phi(t,\xi)\in Y_{\tau, t_0}^{2M}$, define $\tilde \phi(t,\xi):=\mathcal{T} \phi(t,\xi)$ be the solution to the linear equation
 \begin{equation}\label{-1}
\left\{\begin{array}{l}
\displaystyle \tilde \phi_t-s \tilde \phi_{\x}- \tilde\phi_{\x\x}=-\frac{4(\phi_\xi+U_\x)}{(u_--u_+)^2}\int_{\mathbb{R}}a(U) U_\xi  \phi d\xi-\big[f(\phi+ U)-f(U)\big]_\x, \quad t>\tau,\\[3mm]
\tilde  \phi(t=\tau,\xi)= \phi_\tau(\xi).
\end{array}\right. 
\end{equation}
First we prove that $\tilde \phi(t,\xi):=\mathcal{T} \phi(t,\xi)\in Y_{\tau, t_0}^{2M}$ for suitably small $t_0=t_0(M)>0$.
Multiplying the equation $\eqref{-1}_1$ by  $\tilde \phi(t,\xi)$ and integrating the resulting equation with respect to $t$ and $\x$ over $[\tau,\tau+t_0]\times \RR$, we can arrive at
  \begin{equation}\label{-3}
 \begin{aligned}
\frac{1}{2}\|\tilde  \phi\|^2(\tau+t_0)+\int_\tau^{\tau+t_0} \|\tilde  \phi_\x\|^2 dt=&\frac{1}{2}\|\phi_\tau\|^2-\int_\tau^{\tau+t_0}\int_\RR \tilde  \phi\big[f(\phi+ U)-f(U)\big]_\x d\x dt\\[3mm]
&  -4\int_\tau^{\tau+t_0}\int_\RR\frac{\tilde  \phi(\phi_\xi+U_\x)}{(u_--u_+)^2}d \x\int_{\mathbb{R}} a(U) U_\xi   \phi d\xi dt.
   \end{aligned}
    \end{equation}
Now we estimate the last two terms on the right hand side of \eqref{-3}. First it holds that
  \begin{equation}\label{-4}
 \begin{aligned}
&\left|-\int_\tau^{\tau+t_0}\int_\RR \tilde  \phi\big[f(\phi+ U)-f(U)\big]_\x d\x dt\right|\\
&=\left|\int_\tau^{\tau+t_0}\int_\RR \tilde  \phi\big[%\frac{f^{\prime\prime\prime}(\theta_1)}{2}( \phi)^2
f^\prime(\phi+U) \phi_\x-\big(f^{\prime}(\phi+U)-f^\prime(U)\big) U_\xi \big]d\x dt\right|\\
&=\left|\int_\tau^{\tau+t_0}\int_\RR \tilde  \phi\big[%\frac{f^{\prime\prime\prime}(\theta_1)}{2}( \phi)^2
f^\prime(\phi+U) \phi_\x-f^{\prime\prime}(\theta_1) U_\xi \phi\big]d\x dt\right|\\
&\leq C(M)\int_\tau^{\tau+t_0}\|\tilde  \phi\|\big[\| \phi\|%\|\tilde  \phi\|\|\phi\|\|\phi\|_{L^\infty(\RR)}
+\| \phi_\x\|\big]dt\\
&\leq \frac{1}{16}\displaystyle \sup_{\tau\leq t\leq \tau+ t_0}\|\tilde  \phi\|^2+C(M)t_0^2\|\phi\|^2_{H^1(\RR)},
 \end{aligned}
    \end{equation}
where and in the sequel $C(M)$ is a generic positive constant depending on $M$ and  we have used the fact 
$$\sup_{\tau\leq t\leq \tau+t_0}\|\phi\|_{L^\infty(\RR)}\leq C\sup_{\tau\leq t\leq \tau+t_0} \|\phi\|_{H^1(\RR)}\leq CM.$$
On the other hand, we have
      \begin{equation}\label{-5}
 \begin{aligned}
&\left|-4\int_\tau^{\tau+t_0}\int_\RR\frac{\tilde  \phi(\phi_\xi+U_\x)}{(u_--u_+)^2}d \x\int_{\mathbb{R}}  a(U) U_\xi \phi d\xi dt\right|\\
&\leq 4\left|\int_\tau^{\tau+t_0}\int_\RR\frac{\tilde  \phi\phi_\xi}{(u_--u_+)^2}d \x\int_{\mathbb{R}} a(U) U_\xi \phi d\xi dt\right|\\
&\ \ \ \ +4\left|\int_\tau^{\tau+t_0}\int_\RR\frac{\tilde  \phi U_\x}{(u_--u_+)^2}d \x\int_{\mathbb{R}} a(U) U_\xi \phi d\xi dt\right|\\
&\leq C\left|\int_\tau^{\tau+t_0}\|\tilde  \phi\|\| \phi_\xi\|\| \phi\|_{L^\infty(\RR)}dt\right|\\
&\ \ \ \ +C\left|\int_\tau^{\tau+t_0}\bigg(\int_\RR\frac{\tilde  \phi U_\x}{(u_--u_+)^2}d\x\bigg)^2+\bigg(\int_\RR  aU_\x \phi d\x\bigg)^2dt\right|\\
&\leq  \frac{1}{16}\displaystyle \sup_{\tau\leq t\leq \tau+ t_0}\|\tilde  \phi\|^2+C(M)t_0^2\|\phi_\xi\|^2+C\int_\tau^{\tau+t_0}\big(\|\tilde  \phi\|^2+\| \phi\|^2\big) dt\\
&\leq  \frac{1}{16}\displaystyle \sup_{\tau\leq t\leq \tau+t_0}\|\tilde  \phi\|^2+C(M)(t_0+t_0^2)\|\phi\|^2_{H^1(\RR)}+Ct_0\displaystyle \sup_{\tau\leq t\leq \tau+t_0}\|\tilde \phi\|^2.
 \end{aligned}
    \end{equation}
Substituting \eqref{-4} and \eqref{-5} into \eqref{-3}, and then choosing a suitably small $t_0=t_0(M)>0$, we have
 \begin{equation}\label{-6}
 \begin{aligned}  
  \displaystyle \sup_{\tau\leq t\leq \tau+t_0}\|\tilde \phi\|^2+\int_\tau^{\tau+t_0} \|\tilde  \phi_\x\|^2 dt%\leq 2 \sup_{\tau\leq t\leq \tau+t_0}\|\phi\|^2_{H^1(\RR)} 
\leq 2M^2.
  \end{aligned}
    \end{equation}  
    Multiplying the equation $\eqref{-1}_1$ by  $-\tilde \phi_{\xi\xi}$ and then integrating the resulting equation with respect to $t$ and $\x$ over $[\tau,\tau+t_0]\times \RR$, we can obtain
  \begin{equation}\label{--3}
 \begin{aligned}
\frac{1}{2}\|\tilde  \phi_\x\|^2(\tau+t_0)+\int_\tau^{\tau+t_0} \|\tilde  \phi_{\x\x}\|^2 dt=&\frac{1}{2}\|(\phi_{\tau})_\xi\|^2+\int_\tau^{\tau+t_0}\int_\RR \tilde  \phi_{\x\x}\big[f(\phi+ U)-f(U)\big]_\x d\x dt\\
&  +4\int_\tau^{\tau+t_0}\int_\RR\frac{\tilde  \phi_{\x\x}(\phi_\xi+U_\x)}{(u_--u_+)^2}d \x\int_{\mathbb{R}} a(U) U_\xi \phi d\xi dt.
   \end{aligned}
    \end{equation}
 First, it holds that
      \begin{equation}\label{--4}
 \begin{aligned}
  &  \left|\int_\tau^{\tau+t_0}\int_\RR \tilde  \phi_{\x\x}\big[f(\phi+ U)-f(U)\big]_\x d\x dt\right|\\
    &\leq \frac{1}{16}\int_\tau^{\tau+t_0}\|\tilde  \phi_{\x\x}\|^2dt  +C\int_\tau^{\tau+t_0}\int_\RR \left|\big[f(\phi+ U)-f(U)\big]_\x \right|^2d\x  dt \\
&  \leq\frac{1}{16}\int_\tau^{\tau+t_0}\|\tilde  \phi_{\x\x}\|^2dt+  C(M)\int_\tau^{\tau+t_0}\big(\|\phi\|^2+\|\phi_\x\|^2\big) dt\\
&  \leq \frac{1}{16}\int_\tau^{\tau+t_0}\|\tilde  \phi_{\x\x}\|^2dt +C(M)t_0 \sup_{\tau\leq t\leq \tau+t_0}\|\phi\|^2_{H^1(\RR)}.
       \end{aligned}
    \end{equation}
Next, we can calculate
    \begin{equation}\label{--5}
 \begin{aligned}
  & \left|4\int_\tau^{\tau+t_0}\int_\RR\frac{\tilde  \phi_{\x\x}\phi_\xi}{(u_--u_+)^2}d \x\int_{\mathbb{R}}a(U) U_\xi  \phi d\x dt\right|\\
  &\leq C\int_\tau^{\tau+t_0}\|\phi_\x\|\|\tilde   \phi_{\x\x}\|\|\phi\|_{L^\infty(\RR)}dt\\
&  \leq \frac{1}{16}\int_\tau^{\tau+t_0}\|\tilde  \phi_{\x\x}\|^2dt+C(M)\int_\tau^{\tau+t_0}\|\phi_\x\|^2 dt\\
&  \leq \frac{1}{16}\int_\tau^{\tau+t_0}\|\tilde  \phi_{\x\x}\|^2dt +C(M)t_0 \sup_{\tau\leq t\leq \tau+t_0}\|\phi\|^2_{H^1(\RR)},
       \end{aligned}
    \end{equation}
    and 
    \begin{equation}\label{--6}
 \begin{aligned}
  &\left|4\int_\tau^{\tau+t_0}\int_\RR\frac{\tilde  \phi_{\x\x}U_\xi}{(u_--u_+)^2}d \x\int_{\mathbb{R}} a(U) U_\xi \phi  d\x dt\right|\\
  %&=4  \left|\int_\tau^{\tau+t_0}\int_\RR\frac{\tilde  \phi_{\x}U_{\xi\x}}{(u_--u_+)^2}d \x\int_{\mathbb{R}} \phi a(U) U_\xi  d\x dt\right|\\
  &\leq C\int_\tau^{\tau+t_0}\|\tilde  \phi_{\x\x}\|\left|\int_\RR  a(U) U_\xi  \phi \right|d\x dt\\
&  \leq \frac{1}{16}\int_\tau^{\tau+t_0}\|\tilde  \phi_{\x\x}\|^2dt+Ct_0^2\displaystyle \sup_{\tau\leq t\leq \tau+t_0}\|\phi\|_{L^\infty(\RR)}^2\\
&  \leq \frac{1}{16}\int_\tau^{\tau+t_0}\|\tilde  \phi_{\x\x}\|^2dt+Ct_0^2 \sup_{\tau\leq t\leq \tau+t_0}\|\phi\|^2_{H^1(\RR)}.
       \end{aligned}
    \end{equation}
Substituting \eqref{--4}, \eqref{--5} and \eqref{--6} into \eqref{--3}, and then choosing a suitably small $t_0=t_0(M)>0$, we have
 \begin{equation}\label{--8}
 \begin{aligned}  
  \displaystyle \sup_{\tau\leq t\leq \tau+t_0}\|\tilde \phi_\x\|^2+\int_\tau^{\tau+t_0} \|\tilde  \phi_{\x\x}\|^2 dt%\leq 2\sup_{\tau\leq t\leq \tau+t_0} \|\phi\|^2_{H^1(\RR)}
  \leq 2M^2.
  \end{aligned}
    \end{equation}  
    By \eqref{-6} and \eqref{--8}, we have 
     \begin{equation}\label{--9}
 \begin{aligned}  
  \displaystyle \sup_{\tau\leq t\leq \tau+t_0}\|\tilde \phi\|^2_{H^1(\RR)}+\int_\tau^{\tau+t_0} \|\tilde  \phi_\x\|^2_{H^1(\RR)} dt\leq 4M^2.
  \end{aligned}
    \end{equation}  
Therefore, we proved $\tilde\phi\in Y_{\tau, t_0}^{2M}$. 
Next we prove that the above mapping $\mathcal{T}: Y_{\tau, t_0}^{2M}\to Y_{\tau, t_0}^{2M}$ is a contraction mapping in $C([\tau, \tau+t_0]; H^1(\RR))$ for suitably small $t_0>0$.
For this, $\forall \phi_1, \phi_2\in Y_{\tau, t_0}^{2M}$, denote $\Phi:= \phi_2- \phi_1$ and  $\tilde \Phi:=\mathcal{T}\phi_2-\mathcal{T}\phi_1=\tilde \phi_2-\tilde \phi_1$. Then we have 
  \begin{equation}\label{-8}
\left\{
\begin{array}{l}
\di \tilde \Phi_t-\tilde \Phi_{\x\x}-s\tilde \Phi_\x=-[f( \phi_2+U)-f( \phi_1+U)]_\x-\frac{4U_\x}{(u_--u_+)^2}\int_\RR a(U)U_\x \Phi d\x\\[3mm]
\di \qquad \qquad \qquad \qquad- \frac{4\phi_{2\x}}{(u_--u_+)^2}\int_\RR a(U)U_\x \Phi d\x- \frac{4\Phi_\x}{(u_--u_+)^2}\int_\RR   a(U)U_\x \phi_1 d\x,\\[3mm]
\di \tilde  \Phi(t=\tau,\xi)= 0.\\
  \end{array}
\right.
    \end{equation} 
Multiplying the equation $\eqref{-8}_1$ by  $\tilde \Phi$, and then integrating the resulting equation with respect to $t$ and $\x$ over $[\tau,\tau+t_0]\times \RR$, we have 
       \begin{equation}\label{-10}
 \begin{aligned}  
\frac{1}{2}\|\tilde \Phi\|^2(\tau+t_0)+\int_\tau^{\tau+t_0} \|\tilde \Phi_\x\|^2dt=&-\int_\tau^{\tau+t_0}\int_\RR\tilde \Phi[f( \phi_2+U)-f( \phi_1+U)]_\x d\x dt\\
&-4\int_\tau^{\tau+t_0}\int_\RR\frac{\tilde \Phi U_\x}{(u_--u_+)^2}d\x\int_\RR a(U)U_\x \Phi d\x dt\\
&-4 \int_\tau^{\tau+t_0}\int_\RR\frac{\phi_{2\x}\tilde \Phi}{(u_--u_+)^2}d\x\int_\RR a(U)U_\x \Phi d\x dt\\
&- 4\int_\tau^{\tau+t_0}\int_\RR\frac{\Phi_\x\tilde \Phi}{(u_--u_+)^2}d\x\int_\RR   a(U)U_\x \phi _1 d\x dt.
  \end{aligned}
    \end{equation} 
We estimate the right hand side of \eqref{-10} terms by terms. First, we have
    % \begin{equation}\label{-11}
 %\begin{aligned}  
%& \left|\int_\tau^{\tau+t_0}\int_\RR\tilde  \Phi[f( \phi_2+U)-f( \phi_1+U)]_\x d\x dt\right|\\
%&\leq \left|\int_\tau^{\tau+t_0}\int_\RR\tilde  \Phi[f^\prime( \phi_2+U)-f^\prime( \phi_1+U)] U_\x d\x dt\right|\\
 %&\ \ \ \ + \left|\int_\tau^{\tau+t_0}\int_\RR\tilde  \Phi[f^\prime( \phi_2+U) \phi_{2,\x}-f^\prime( \phi_2+U) \phi_{1,\x}]d\x dt\right|\\
 %&\ \ \ \ + \left|\int_\tau^{\tau+t_0}\int_\RR\tilde  \Phi[f^\prime( \phi_2+U) \phi_{1,\x}-f^\prime( \phi_1+U) \phi_{1,\x}]d\x dt\right|\\
%& \leq C(\|\phi_\tau\|_{H^1(\RR)})\int_\tau^{\tau+t_0}\int_\RR|\tilde  \Phi|| \Phi|d\x dt+C(\|\phi_\tau\|_{H^1(\RR)})\int_\tau^{\tau+t_0}\int_\RR|\tilde  \Phi|| \Phi_\x|d\x dt\\
%&+C(\|\phi^{\mb X}_\tau\|_{H^1(\RR)})\int_\tau^{\tau+t_0}\int_\RR|\tilde f||f|d\x dt\\
%&\leq \frac{1}{64}\displaystyle \sup_{\tau\leq t\leq \tau+t_0}\|\tilde  \Phi\|^2+C(\|\phi_\tau\|_{H^1(\RR)})t_0^2\displaystyle \sup_{\tau\leq t\leq \tau+t_0}(\| \Phi_\x\|^2+\| \Phi\|^2).
  %\end{aligned}
    %\end{equation}  
        \begin{equation}\label{-11}
 \begin{aligned}  
& \left|-\int_\tau^{\tau+t_0}\int_\RR\tilde  \Phi[f( \phi_2+U)-f( \phi_1+U)]_\x d\x dt\right|\\
&= \left|\int_\tau^{\tau+t_0}\int_\RR\tilde  \Phi_\x[f( \phi_2+U)-f( \phi_1+U)] d\x dt\right|\\
&\leq C(M)\int_\tau^{\tau+t_0}\|\tilde \Phi_\x\|\|\Phi\|dt\\
&\leq \frac{1}{8}\int_\tau^{\tau+t_0}\|\tilde \Phi_\x\|^2d\x+ C(M) t_0\displaystyle \sup_{\tau\leq t\leq \tau+t_0}\| \Phi\|^2.
  \end{aligned}
    \end{equation} 
Then we have 
    \begin{equation}\label{-12}
 \begin{aligned}  
&\left|-4\int_\tau^{\tau+t_0}\int_\RR\frac{ U_\x \tilde \Phi }{(u_--u_+)^2}d\x\int_\RR  a(U)U_\x \Phi d\x  dt\right|\\
&\leq C\left|\int_\tau^{\tau+t_0} {\bigg(\int_\RR{U_\x \tilde \Phi  }d\x\bigg)^2}+{\bigg(\int_\RR  a(U)U_\x \Phi d\x\bigg)^2} dt\right| \\
&\leq Ct_0\big(\sup_{\tau\leq t\leq \tau+t_0}\|\tilde \Phi\|^2+\sup_{\tau\leq t\leq \tau+t_0}\| \Phi\|^2\big).
  \end{aligned}
    \end{equation} 
Finally, it holds that  \begin{equation}\label{-13}
 \begin{aligned}  
&\left|-4\int_\tau^{\tau+t_0}\int_\RR\frac{\phi_{2\x}\tilde \Phi}{(u_--u_+)^2}d\x\int_\RR a(U)U_\x \Phi d\x dt\right|\\
&\leq C\left|\int_\tau^{\tau+t_0} {\left(\int_\RR{\phi_{2\x}\tilde \Phi }d\x\right)^2}+{\left(\int_\RR  a(U)U_\x \Phi  d\x\right)^2} dt\right|\\
&\leq C(M)t_0\big(\sup_{\tau\leq t\leq \tau+t_0}\|\tilde \Phi\|^2+\sup_{\tau\leq t\leq \tau+t_0}\| \Phi\|^2\big),
\end{aligned}
    \end{equation}
and
\begin{equation}\label{-14}
 \begin{aligned}  
& \left|-4 \int_\tau^{\tau+t_0}\int_\RR\frac{\Phi_\x\tilde \Phi}{(u_--u_+)^2}d\x\int_\RR   a(U)U_\x \phi_1 d\x dt\right|\\
& \leq C(M)\int_\tau^{\tau+t_0}\|\Phi_\x\|\|\tilde \Phi\|dt\\
 &\leq \frac{1}{8}\sup_{\tau\leq t\leq \tau+t_0}\|\tilde \Phi\|^2+ C(M)t_0^2\displaystyle \sup_{\tau\leq t\leq \tau+t_0}\|\Phi_\x\|^2.
\end{aligned}
    \end{equation}
Substituting \eqref{-11}-\eqref{-14} into \eqref{-10}  and then taking $t_0$ smaller than before if needed,  we have 
\begin{equation}\label{-15}
\displaystyle \sup_{\tau\leq t\leq \tau+t_0}\|\tilde \Phi\|^2\leq \frac{1}{3}\displaystyle \sup_{\tau\leq t\leq \tau+t_0}\|\Phi\|^2_{H^1(\RR)}.
   \end{equation}
 Multiplying the equation $\eqref{-8}_1$ by  $-\tilde \Phi_{\x\x}$, and then integrating the resulting equation with respect to $t$ and $\x$ over $[\tau,\tau+t_0]\times \RR$, we can get
        \begin{equation}\label{--10}
 \begin{aligned}  
\frac{1}{2}\|\tilde \Phi_\x\|^2(\tau+t_0)+\int_\tau^{\tau+t_0} \|\tilde \Phi_{\x\x}\|^2dt=&\int_\tau^{\tau+t_0}\int_\RR\tilde \Phi_{\x\x}[f( \phi_2+U)-f( \phi_1+U)]_\x d\x dt\\
&+4\int_\tau^{\tau+t_0}\int_\RR\frac{\tilde \Phi_{\x\x} U_\x}{(u_--u_+)^2}d\x\int_\RR a(U)U_\x \Phi d\x dt\\
&+4 \int_\tau^{\tau+t_0}\int_\RR\frac{\phi_{2,\x}\tilde \Phi_{\x\x}}{(u_--u_+)^2}d\x\int_\RR  a(U)U_\x \Phi d\x dt\\
&+4\int_\tau^{\tau+t_0}\int_\RR\frac{\Phi_\x\tilde \Phi_{\x\x}}{(u_--u_+)^2}d\x\int_\RR   a(U)U_\x \phi_1 d\x dt.
  \end{aligned}
    \end{equation}   
We need to estimate the right hand side of \eqref{--10}. First, it holds that
     \begin{equation}\label{--11}
 \begin{aligned}  
& \left|\int_\tau^{\tau+t_0}\int_\RR\tilde  \Phi_{\x\x}[f( \phi_2+U)-f( \phi_1+U)]_\x d\x dt\right|\\
&\leq \frac{1}{16}\int_\tau^{\tau+t_0}\|\tilde \Phi_{\x\x}\|^2dt+ C(M)\int_\tau^{\tau+t_0}\big(\|\Phi\|^2+\|\Phi_\x\|^2\big) dt\\
&\leq \frac{1}{16}\int_\tau^{\tau+t_0}\|\tilde \Phi_{\x\x}\|^2dt+C(M)t_0\displaystyle\sup_{\tau\leq t\leq \tau+t_0}(\| \Phi\|^2+\|\Phi_\x\|^2).
 \end{aligned}  
    \end{equation} 
Then we can compute that
    \begin{equation}\label{--12}
 \begin{aligned}  
&\left|4\int_\tau^{\tau+t_0}\int_\RR\frac{\tilde \Phi_{\x\x} U_\x}{(u_--u_+)^2}d\x\int_\RR  a(U)U_\x \Phi  d\x dt\right|\\
%&=\left|-4\int_\tau^{\tau+t_0}\int_\RR\frac{\tilde \Phi_{\x} U_{\x\x}}{(u_--u_+)^2}d\x\int_\RR a(U)U_\x \Phi  d\x dt\right|\\
&\leq \displaystyle \frac{1}{16}\sup_{\tau\leq t\leq \tau+t_0}\| \tilde \Phi_{\xi\xi}\|^2+Ct_0^2\|\Phi\|^2_{L^\infty(\RR)}\\
&\leq  \displaystyle \frac{1}{16}\sup_{\tau\leq t\leq \tau+t_0}\| \tilde \Phi_{\xi\xi}\|^2+Ct_0^2\sup_{\tau\leq t\leq \tau+t_0}\|\Phi\|^2_{H^1(\RR)},
  \end{aligned}
    \end{equation} 
    \begin{equation}\label{--13}
 \begin{aligned}  
&\left|4\int_\tau^{\tau+t_0}\int_\RR\frac{\phi_{2\x}\tilde \Phi_{\x\x}}{(u_--u_+)^2}d\x\int_\RR a(U)U_\x \Phi d\x dt\right|\\
&\leq C\int_\tau^{\tau+t_0}\|\tilde \Phi_{\x\x}\|\|\phi_{2\x}\|\|\Phi\|_{L^\infty(\RR)}dt \\
&\leq  \frac{1}{16}\int_\tau^{\tau+t_0}\|\tilde \Phi_{\x\x}\|^2dt+C(M)t_0\sup_{\tau\leq t\leq \tau+t_0}\|\Phi\|^2_{H^1(\RR)},
\end{aligned}
    \end{equation}
and
\begin{equation}\label{--14}
 \begin{aligned}  
& \left| 4\int_\tau^{\tau+t_0}\int_\RR\frac{\Phi_\x\tilde \Phi_{\x\x}}{(u_--u_+)^2}d\x\int_\RR   a(U)U_\x  \phi_1d\x dt\right|\\
& \leq C\int_\tau^{\tau+t_0}\|\Phi_\x\|\|\tilde \Phi_{\x\x}\|\|\phi_1\|_{L^\infty(\RR)}dt\\
 &\leq  \displaystyle \frac{1}{16}\int _\tau^{\tau+t_0}\| \tilde \Phi_{\xi\xi}\|^2d t+C(M)t_0\sup_{\tau\leq t\leq \tau+t_0}\|\Phi\|^2_{H^1(\RR)}.
\end{aligned}
    \end{equation}
    Substituting  \eqref{--11}-\eqref{--14} into \eqref{--10}  and then taking $t_0$ suitably smaller than before if needed,  we have 
\begin{equation}\label{--15}
\displaystyle \sup_{\tau\leq t\leq \tau+t_0}\|\tilde \Phi_\x\|^2\leq \frac{1}{3}\displaystyle \sup_{\tau\leq t\leq \tau+t_0}\|\Phi\|^2_{H^1(\RR)}.
   \end{equation}
By \eqref{-15} and \eqref{--15}, we have 
\begin{equation}\label{--16}
\displaystyle \sup_{\tau\leq t\leq \tau+t_0}\|\tilde \Phi\|^2_{H^1(\RR)}\leq \frac{2}{3}\displaystyle \sup_{\tau\leq t\leq \tau+t_0}\|\Phi\|^2_{H^1(\RR)}.
   \end{equation}
Hence, the mapping  $\mathcal{T}: Y_{\tau, t_0}^{2M}\to Y_{\tau, t_0}^{2M}$ is a contraction mapping in $C([\tau, \tau+t_0]; H^1(\RR))$ for suitably small $t_0=t_0(M)>0$.
Therefore, there exist a suitably small $t_0=t_0(M)>0$ and a unique fixed point $\phi\in Y_{\tau, t_0}^{2M}\subset C([\tau, \tau+t_0]; H^1(\RR))$ for the mapping $\mathcal{T}$, that is, $$ \phi(t,\xi)= \mathcal{T}\phi(t,\xi).$$
Then we  finish the proof of Proposition  \ref{2.1}.

\vskip0.3cm
%\noindent{\bf Acknowledgement:} The authors sincerely appreciate the anonymous referees for their invaluable suggestions, which greatly improve the presentation of the paper, making it more coherent and reader-friendly.
\noindent\textbf{Acknowledgment:}   A. Vasseur was partially supported by the NSF grants DMS 2219434 and 2306852. Y. Wang supported by NSFC (Grant No. 12171459, 12288201, 12090014, 12421001) and CAS Project for Young Scientists in Basic Research, Grant No. YSBR-031.
%\vspace{-0.6cm}

\vskip0.3cm
\noindent{\bf Conflict of Interest:} The authors declared that they have no conflicts of interest to this work.

\vskip0.3cm
\noindent{\bf Availability of data and material:} Data sharing not applicable to this article as no datasets were generated or analyzed during the current study.


\begin{thebibliography}{xx}
  \bibitem{BC}  P. Blochas and J. Cheng. Viscous Destabilization for Large Shocks of Conservation Laws, arXiv preprint: 2501.01537, 2025.
  \bibitem{KMY} K. Choi, M.-J. Kang, Y. Kwon and A. Vasseur. Contraction for large perturbations of traveling waves in a hyperbolic-parabolic system arising from a chemotaxis model. Math. Models Methods Appl. Sci, 30(2): 387-437, 2020.
 \bibitem{KM1} H. Freistühler and D. Serre. $L^1$-stability of shock waves in scalar viscous conservation laws, Comm. Pure Appl. Math., 51: 291-301, 1998.
  \bibitem{Go} J. Goodman. Nonlinear asymptotic stability of viscous shock profiles for conservation laws. Archive for Rational Mechanics and Analysis, 95: 325-344, 1986.
    \bibitem{G} J. Goodman. Stability of viscous scalar shock fronts in several dimensions. Trans. Am. Math. Soc., 311: 683-695, 1989.
    \bibitem{Hopf} E. Hopf. The partial differential equations $u_t+uu_x=\mu u_{xx}$. Comm. Pure. Appl. Math., 3: 201-230, 1950.
   \bibitem{HWZ}    F. M. Huang, Y. Wang and J. Zhang. Time-asymptotic stability of composite waves of degenerate Oleinik shock and rarefaction for non-convex conservation laws.  Math. Ann., 392: 1-46, 2025.
 %   \bibitem{HX} F. M. Huang and L. D. Xu. Decay rate toward the traveling wave for scalar viscous conservation law. Comm. Math. Anal. Appl.,  3:395-409, 2022.
\bibitem{HLZ}  J. Humpherys,  G. Lyng  and K. Zumbrun, Multidimensional stability of large-amplitude Navier-Stokes shocks. Arch. Ration. Mech. Anal., 226: 923-973, 2017.
   \bibitem{IO} A. M. Il'in and O. A. Oleinik. Asymptotic behavior of solution  of the Cauchy problem for some quasilinear equations for large values of the time. Mat. Sb. (N.S.), 51 (93): 191-216, 1960.
    \bibitem{KM} M.-J. Kang.  $L^2$ -type contraction for shocks of scalar viscous conservation laws with strictly convex flux. J. Math. Pure Appl., 145: 1-43, 2021.
    \bibitem{KO} M.-J. Kang and H. Oh. $L^2$ decay for large perturbations of viscous shocks for multi-d Burgers equation. arXiv preprint: 2430.08445, 2024.
\bibitem{KV2016} M.-J. Kang and A. Vasseur. Criteria on contractions for entropic discontinuities of systems of conservation laws. Arch. Ration. Mech. Anal., 222(1): 343–391, 2016.
   \bibitem{KV} M.-J. Kang and A. Vasseur.  $L^2$-contraction for shock waves of scalar viscous conservation laws. Ann. l'Institut Henri Poincare C, Analyse non lineaire, 34 (1): 139-156, 2017.
      \bibitem{VKM} M.-J. Kang  and A. Vasseur. Contraction property for large perturbations of shocks of the barotropic Navier-Stokes system. J. Eur. Math. Soc., 23 (2): 585-638, 2020. 
\bibitem{KVW2} M.-J. Kang, A. Vasseur and Y. Wang. $L^2$-contraction of large planar shock waves for multi-dimensional scalar viscous conservation laws. Journal of Differential Equations, 267 (5): 2737-2791, 2019. 
 \bibitem{W1} M.-J. Kang, A. Vasseur and Y. Wang. Time-asymptotic stability of composite waves of viscous shock and rarefaction for barotropic Navier-Stokes equations. Adv. Math., 419: 108963, 2023.
\bibitem{W2} M.-J. Kang, A. Vasseur and  Y. Wang. Time-Asymptotic stability of generic Riemann solutions for compressible Navier-Stokes-Fourier equations.  Arch. Rational Mech. Anal., 249 (42), 2025. https://doi.org/10.1007/s00205-025-02116-w.
 \bibitem{KAM} S. Kawashima and A. Matsumura. Asymptotic stability of traveling wave solutions of systems for one-dimensional gas motion. Comm. Math. Phys., 101: 97-127, 1985.
  \bibitem{FS} C. E. Kenig,  and F. Merle. Asymptotic stability and Liouville theorem for scalar viscous conservation laws in cylinders. Comm. Pure. Appl. Math., 59: 769-796, 2006.
\bibitem{K} S. N. Kružkov.  First order quasilinear equations with several independent variables, Mat. Sb. (N.S.), 81(123): 228-255, 1970.
\bibitem{Liu} T.-P. Liu. Nonlinear stability of shock waves for viscous conservation laws. Mem. Am. Math. Soc., 56: 233-236, 1985.
\bibitem{LZ} T.-P. Liu and Y. N. Zeng. Shock waves in conservation laws with physical viscosity. Mem. Amer. Math. Soc., 234 (1105): vi+168, 2015.
%\bibitem{MN} A. Matsumura and K. Nishihara. Asymptotic stability of traveling waves for scalar viscous conservation laws with non-convex nonlinearity.  Comm. Math. Phys., 165:83-96, 1994.
\bibitem{MZ} C. Mascia and K. Zumbrun. Stability of small-amplitude shock profiles of symmetric
  hyperbolic-parabolic systems. Comm. Pure Appl. Math., 57: 841-876, 2004.
\bibitem{MN}  A. Matsumura  and K. Nishihara. On the stability of traveling wave solutions of a one-dimensional model system for compressible viscous gas, Jpn. J. Appl. Math., 2: 17-25, 1985.
\bibitem{N} K. Nishihara. A note on the stability of traveling wave solutions of the Burgers’ equation. Jpn. J. Appl. Math., 2 (1): 27-35, 1985.
\bibitem{S}  D. Sattinger. On the stability of waves of nonlinear parabolic systems. Adv. Math., 22 (3): 312-355, 1976.
\bibitem{DS} D. Serre. $L^1$-stability of nonlinear waves in scalar conservation laws. Handbook of differential equations: Evolutionary equations. Vol. 1. North-Holland, 473-553, 2002. 
\bibitem{SX} A. Szepessy and Z.-P. Xin. Nonlinear stability of viscous shock waves. Arch. Ration. Mech. Anal., 122: 53-104, 1993.
\bibitem{WW} T. Wang and Y. Wang. Nonlinear stability of planar viscous shock wave to three-dimensional compressible Navier-Stokes equations. arXiv preprint:2204.09428, To appear in JEMS, 2026.
\bibitem{Z} K. Zumbrun. Planar stability criteria for viscous shock waves of systems with real viscosity. In: Hyperbolic systems of balance laws, Lecture Notes in Math. 1911, Springer, Berlin, 229-326 2007.


  
   \end{thebibliography}
   \end{document}